\newcommand{\stkout}[1]{\ifmmode\text{\sout{\ensuremath{#1}}}\else\sout{#1}\fi}
\newtheorem{theorem}{Theorem}[section]
\newtheorem{lemma}[theorem]{Lemma}
\newtheorem{proposition}[theorem]{Proposition}
\newtheorem{corollary}[theorem]{Corollary}
\theoremstyle{definition}
\newtheorem{definition}[theorem]{Definition}
\newtheorem{remark}[theorem]{Remark}
\newtheorem{example}[theorem]{Example}
\theoremstyle{remark}
\numberwithin{equation}{section}
\begin{document}

\title{Nilpotent graphs over skew PBW extensions}
\author{Sebasti\'an Higuera}
\address{Higuera: Universidad ECCI, Bogot\'a, D. C., Colombia}
\email{shiguerar@ecci.edu.co}

\author{Armando Reyes}
\address{Reyes: Universidad Nacional de Colombia, Bogot\'a, D.C., Colombia}
\email{mareyesv@unal.edu.co}

\dedicatory{Dedicated to Edgar Higuera}

\begin{abstract} 
     We investigate the diameter and girth of the nilpotent graph for skew PBW extensions over $2$-primal rings, generalizing similar results on skew polynomial rings. Under certain compatibility conditions, we establish bounds for the diameter of the nilpotent graph and prove invariance of the girth under polynomial extensions. 
\end{abstract}

\subjclass[2020]{05C12, 05C20, 13A99, 16S15, 16S30, 16S32, 16S36, 16S38 16U99}
\keywords{Nilpotent graph, diameter, compatible ring, skew polynomial ring, skew PBW extension, 2-primal ring.}

\maketitle	



\section{Introduction}\label{ch0}
\medskip

Throughout the paper, every ring is associative (not necessarily commutative) with identity unless otherwise stated. We write $Z_l(R),\ Z_r(R)$, $Z(R)^*$ and $U(R)$ to denote the set of all left zero divisors of $R$, the set of all right zero divisors of $R$, the set of non-zero zero divisors of $R$ and the set of all units of $R$, respectively. $P(R)$, $\mathrm{nil}^{*}(R)$ and $\mathrm{nil}(R)$ denote the prime radical of $R$, the upper nilradical of $R$ (that is, the sum of all nil ideals of $R$) and the set of nilpotent elements of $R$, respectively. If $P(R) = \mathrm{nil}(R)$ then $R$ is called \emph{2-primal}, and if $\mathrm{nil}^{*}(R) = \mathrm{nil}(R)$ then $R$ is an \emph{NI} ring. 

A graph $\Gamma$ is \emph{connected} if for every pair of distinct vertices $u$ and $v$ there exists a finite sequence of distinct vertices $v_1 = u, v_2, \dotsc, v_m = v$ such that each pair $\{v_i, v_{i+1}\}$ is an edge; the sequence is called a \emph{path}. If $a$ and $b$ are two distinct vertices of graph $\Gamma$, then the \emph{distance} between $a$ and $b$, denoted by $d(a,b)$, is the length of a shortest path connecting $a$ and $b$, if such a path exists; otherwise $d(a, b) = \infty$. The \emph{diameter} of a graph $\Gamma$ is defined as
\[
{\rm diam}(\Gamma) = \sup\{ d(a,b) \mid a\ {\rm and}\ b\ {\rm are\ distinct\ vertices\ of}\ \Gamma\}.
\]
If $V=\{v_1, v_2, \dotsc, v_m\}$ is a set of distinct vertices such that each pair $\{v_i, v_{i+1}\}$ is an edge and $v_1 = v_m$, then $V$ is a \emph{cycle} of length $m-1$. The \emph{girth} of a graph $\Gamma$, denoted by ${gr}(\Gamma)$, is the length of the shortest cycle in $\Gamma$, provided $\Gamma$ contains a cycle; otherwise ${gr}(\Gamma) = \infty$.

Beck \cite{Beck1988} introduced the notion of \emph{zero-divisor graph} on commutative rings. In his work, he considered all elements of $R$ as vertices of the zero-divisor graph. Anderson and Livingston \cite{AndersonLivingston1999} studied the (undirected) zero-divisor graph $\Gamma(R)$ of $R$, whose vertices are the non-zero zero-divisors of $R$, and considering distinct vertices $a$ and $b$ as adjacent if and only if $ab = 0$. They proved that $\Gamma(R)$ is a finite graph with at least one vertex if and only if $R$ is finite and it is not a field \cite[Theorem 2.2]{AndersonLivingston1999} and $\Gamma(R)$ is connected with $\rm{diam}(\Gamma(R)) \leq 3$ \cite[Theorem 2.3]{AndersonLivingston1999}. They also showed that ${gr}(\Gamma(R)) \leq 4$ if $\Gamma(R)$ contains a cycle \cite[Theorem 2.4]{AndersonLivingston1999}. Some properties of zero-divisor graphs for commutative rings have been studied by different authors \cite{AndersonNaseer1993, Axteletal2005}.

In the noncommutative setting, Redmond \cite{Redmond2002} defined the \emph{zero-divisor graph} $\Gamma(R)$, where the vertex set is $Z^{*}(R) = Z(R) \setminus \{0\}$, and distinct vertices $a$ and $b$ are connected by an edge if and only if $ab = 0$ or $ba = 0$ \cite[Definition 3.1]{Redmond2002}. Some properties of these graphs have been studied by different authors \cite{AlhevazKiani2014, AndersonMulay2007, KuzminaMaltsev2015, Zhuravlevetal2013, Lucas2006}. In the setting of skew polynomial rings introduced by Ore \cite{Ore1933}, Hashemi {et al.} \cite{HashemiAbdiAlhevaz2017, HashemiAbdiAlhevaz2019, HashemiAbdiAlhevazSu2020, HashemiAmirjanAlhevaz2017} investigated the relationship between the zero-divisor graph and the algebraic properties of these noncommutative rings by considering some conditions on rings such as reversibility and compatibility. 

If $R$ has no non-zero nilpotent elements, then $R$ is called \emph{reduced}. If $ab = 0$ implies that $ba = 0$, for all $a, b\in R$ then $R$ is {\em reversible}. Finally, $R$ is {\em symmetric} provided $abc = 0$ implies that $acb = 0$, for all $a, b, c\in R$. Commutative rings are symmetric, and symmetric rings are reversible, but the converses do not hold (\cite[Examples I.5 and II.5]{AndersonCamillo1999}, and \cite[Examples 5 and 7]{Marks2002}). In addition, every reduced ring is symmetric \cite[Lemma 1.1]{Shin1973}, but the converse is not necessarily true \cite[Example II.5]{AndersonCamillo1999}. If $\sigma$ is an endomorphism of $R$ and for each $r\in R$, we have that $r\sigma(r)=0$ implies that $r=0$, then $R$ is called {\em rigid}, and if there exists a rigid endomorphism $\sigma$ of $R$, then $R$ is $\sigma$-{\em rigid}. Annin in his Ph.D. Thesis \cite{Annin2002PhD} (see also \cite{Annin2004, HashemiMoussavi2005}) or Hashemi and Moussavi \cite{HashemiMoussavi2005} introduced compatible rings as a generalization of $\sigma$-rigid rings and reduced rings. $R$ is {\it $\sigma$-compatible} if for each $a, b \in R$, then $ab = 0$ if and only if $a\sigma(b) = 0$; $R$ is {\it $\delta$-compatible} if for each $a, b \in R$, we obtain that $ab = 0$ implies that $a\delta(b) = 0$, and if $R$ is both $\sigma$-compatible and $\delta$-compatible, then $R$ is called {\it $(\sigma,\delta)$-compatible}. 


Hashemi {et al.} \cite{HashemiAmirjanAlhevaz2017} investigated the connection between the ring-theoretic properties of a skew polynomial ring $R[x;\sigma,\delta]$ and the graph-theoretic properties of its zero-divisor graph $\Gamma(R[x;\sigma,\delta])$. For example, they characterized the possible diameters of $\Gamma(R[x;\sigma,\delta])$ in terms of $\Gamma(R)$ when $R$ is reversible and $(\sigma,\delta)$-compatible. They also described associative rings for which all skew polynomial zero-divisor graphs are complete. Furthermore, they proved that the diameter of the graph of a skew polynomial ring depends on the properties of $R$ (such as being reduced, having minimal primes, or the ideal $Z(R)$) \cite[Theorem 2.7]{HashemiAmirjanAlhevaz2017}. Additionally, some properties of $R$ can be determined by knowing the diameters of $R$ and $R[x;\sigma, \delta]$ \cite[Theorem 2.8]{HashemiAmirjanAlhevaz2017}. Finally, they also provided upper and lower bounds for the diameter of $R$ in relation to its skew polynomial ring assuming that $R$ is reversible and $(\sigma,\delta)$-compatible.

Chen \cite{Chen2003} defined a kind of graph structure of rings where the vertex set consists of all elements of $R$ and two distinct vertices $x$ and $y$ are called \emph{adjacent} if and only if $xy \in \text{nil}(R)$. Li and Li \cite{LiLi2010} defined the \emph{nilpotent graph} $\Gamma_N(R)$, with vertex set $Z_N^*(R)$ and where two distinct vertices $x$ and $y$ are adjacent if and only if $xy \in \text{nil}(R)$. It is clear that the zero-divisor graph $\Gamma(R)$ is a subgraph of $\Gamma_N(R)$. Nikmehr and Khojasteh \cite{Nikmehretal2013} investigated the diameter and girth of the nilpotent graph for matrix rings over noncommutative rings, while Nikmehr and Azadi \cite{NikmehrAzadi2020} studied these properties for skew polynomial rings and proved that if $R$ is $\sigma$-compatible and symmetric, then $2 \leq \text{diam}(\Gamma_N(R[x;\sigma])) \leq 3$ \cite[Theorem 2.10]{NikmehrAzadi2020}, and that if $\Gamma_N(R)$ contains a cycle, then ${gr}(\Gamma_N(R)) = {gr}(\Gamma_N(R[x;\sigma]))$ \cite[Theorem 2.13]{NikmehrAzadi2020}.

Thinking about the above results on skew polynomial rings and nilpotent graphs, a natural task is to investigate graph-theoretic properties preserved under ring-theoretic constructions over non-commutative rings of polynomial type. Our purpose in this paper is to study the nilpotent graph of a {\em skew Poincar\'e-Birkhoff-Witt extension} ({\em SPBW} for short). The SPBW extension were defined by Gallego and Lezama \cite{LezamaGallego2011} as a generalization of the PBW extensions considered by Bell and Goodearl \cite{BellGoodearl1988} and skew polynomial rings of injective type introduced by Ore \cite{Ore1933}. Several authors have shown that SPBW extensions also generalize families of noncommutative algebras, such as 3-dimensional skew polynomial algebras introduced by Bell and Smith \cite{BellSmith1990}, diffusion algebras considered by Isaev {et al.} \cite{IsaevPyatovRittenberg2001}, ambiskew polynomial rings introduced by Jordan (see \cite{Jordan1993} and references therein), almost normalizing extensions defined by McConnell and Robson \cite{McConnellRobson2001} and skew bi-quadratic algebras recently introduced by Bavula \cite{Bavula2023}. For more details on the SPBW extensions, see \cite{Fajardoetal2020, HigueraRamirezReyes2024, HigueraReyes2022, LezamaReyes2014}.

The paper is organized as follows. In Section \ref{Definitions}, we recall fundamental definitions and establish key properties of SPBW extensions and $(\Sigma, \Delta)$-compatible rings. Section \ref{Nilpotentgraph} presents our main results: Theorems \ref{PBWTheorem2.9}, \ref{PBWTheorem2.10}, \ref{PBWTheorem2.11}, and \ref{PBWTheorem2.13} characterize the diameter and girth of nilpotent graphs associated with these extensions, generalizing previous work on skew polynomial rings $R[x;\sigma]$ by Nikmehr and Azadi \cite{NikmehrAzadi2020}. Section \ref{Examplespaper} illustrates our findings with examples of noncommutative algebras that are not skew polynomial rings.

Throughout the paper, $\mathbb{N}$, $\mathbb{Z}$, $\mathbb{R}$, and $\mathbb{C}$ denote the standard numerical systems, with $\mathbb{N}$ including the zero element. The symbol $\Bbbk$ denotes a field, and $\Bbbk^* := \Bbbk \setminus \{0\}$.

\section{Definitions and elementary properties}\label{Definitions}

\begin{definition}[{\cite[Definition 1]{LezamaGallego2011}}] \label{gpbwextension}
A ring $A$ is called a {\em skew PBW} ({\em SPBW}) {\em extension} {\em of} $R$, which is denoted by $A:=\sigma(R)\langle
x_1,\dots,x_n\rangle$, if the following conditions hold:
\begin{enumerate}
\item[\rm (i)]$R$ is a subring of $A$ sharing the same identity element.

\item[\rm (ii)] There exist finitely many elements $x_1,\dots ,x_n\in A$ such that $A$ is a left free $R$-module, with basis the set of standard monomials
\begin{center}
${\rm Mon}(A):= \{x^{\alpha}:=x_1^{\alpha_1}\cdots
x_n^{\alpha_n}\mid \alpha=(\alpha_1,\dots ,\alpha_n)\in
\mathbb{N}^n\}$.
\end{center}

We consider $x^0_1\cdots x^0_n := 1 \in {\rm Mon}(A)$.
\item[\rm (iii)] For every $1\leq i\leq n$ and any non-zero element $r\in R$, there exists a non-zero element $c_{i,r}\in R$ such that $x_ir-c_{i,r}x_i\in R$.
\item[\rm (iv)] For $1\leq i,j\leq n$, there exists a non-zero element $d_{i,j}\in R$ such that
\[
x_jx_i-d_{i,j}x_ix_j\in R+Rx_1+\cdots +Rx_n,
\]

i.e. there exist elements $r_0^{(i,j)}, r_1^{(i,j)}, \dotsc, r_n^{(i,j)} \in R$ with
\begin{center}
$x_jx_i - d_{i,j}x_ix_j = r_0^{(i,j)} + \sum_{k=1}^{n} r_k^{(i,j)}x_k$.    
\end{center}
\end{enumerate}
\end{definition}

Since ${\rm Mon}(A)$ is a left $R$-basis of $A$, this implies that the elements $c_{i,r}$ and $d_{i, j}$ are unique, and every non-zero element $f \in A$ can be uniquely expressed as $f = a_0 + a_1X_1 + \cdots + a_mX_m$ with $a_i \in R$, $X_0=1$, and $X_i \in \text{Mon}(A)$, for $0 \leq i \leq m$ \cite[Remark 2]{LezamaGallego2011}. 

If $A=\sigma(R)\langle x_1,\dots,x_n\rangle$ is an SPBW extension of $R$, there exist an injective endomorphism $\sigma_i$ of $R$ and a $\sigma_i$-derivation $\delta_i$ of $R$ such that $x_ir=\sigma_i(r)x_i+\delta_i(r)$, for all $1\leq i\leq n$ and $r\in R$ \cite[Proposition 3]{LezamaGallego2011}. The notation $\Sigma:=\{\sigma_1,\dots,\sigma_n\}$ and $\Delta:=\{\delta_1,\dots,\delta_n\}$ denotes the corresponding sets of injective endomorphisms and $\sigma_i$-derivations, respectively.

\begin{definition}\label{quasicommutative}
If $A=\sigma(R)\langle x_1,\dots,x_n\rangle$ is an SPBW extension of $R$, then:
\begin{itemize}
    \item[{\rm (i)}] {\cite[Definition 4]{LezamaGallego2011}} $A$ is {\it quasi-commutative} if the conditions ${\rm (iii)}$ and ${\rm (iv)}$ formulated in Definition \ref{gpbwextension} are replaced by the following: 
\begin{enumerate}
    \item[(iii')] For every $1 \leq i \leq n$ and non-zero element $r \in R$ there exists a non-zero element $c_{i,r} \in R$ such that $x_ir = c_{i,r}x_i$.
    
\item[(iv')] For every $1 \leq i, j \leq n$ there exists a non-zero element $d_{i,j} \in R$ such that $x_jx_i = d_{i,j}x_ix_j$.    
\end{enumerate}

    \item[{\rm (ii)}] \cite[Definition 4]{LezamaGallego2011} $A$ is {\it bijective} if  $\sigma_i$ is bijective for all $1 \leq i \leq n$, and $d_{i,j}$ is invertible for every $1 \leq i <j \leq n$.
    \item [\rm (iii)] \cite[Definition 2.3]{AcostaLezamaReyes2015} If $\sigma_i={\rm id}_R$ is the identity map of $R$ for all $1\le i \le n$, then $A$ is called of \textit{derivation type}. If $\delta_i=0$ for all $1\le i \le n$, then $A$ is of \textit{endomorphism type}.
\end{itemize}
\end{definition}

\begin{remark}[{\cite[Section 3]{LezamaGallego2011}}]\label{definitioncoefficients}
If $A=\sigma(R)\langle x_1,\dots,x_n\rangle$ is an SPBW extension of $R$, then: 
\begin{enumerate}
\item[\rm (1)] For any non-zero element $\alpha=(\alpha_1,\dots,\alpha_n)\in \mathbb{N}^n$, we will write 
$\sigma^{\alpha}:=\sigma_1^{\alpha_1}\circ \dotsb \circ \sigma_n^{\alpha_n}$, $\delta^{\alpha} = \delta_1^{\alpha_1} \circ \dotsb \circ \delta_n^{\alpha_n}$, where $\circ$ represents the usual function composition and 
$|\alpha|:=\alpha_1+\cdots+\alpha_n$. If
$\beta=(\beta_1,\dots,\beta_n)\in \mathbb{N}^n$, then
$\alpha+\beta:=(\alpha_1+\beta_1,\dots,\alpha_n+\beta_n)$.

\item[\rm (2)] Let $\succeq$ be a total order defined on ${\rm Mon}(A)$. If $x^{\alpha}\succeq x^{\beta}$ but $x^{\alpha}\neq x^{\beta}$, we write $x^{\alpha}\succ x^{\beta}$. If $f$ is a non-zero element of $A$, then we use expressions as $f=\sum_{i=0}^ma_iX_i$, with $a_i\in R$, and $X_m\succ \dotsb \succ X_1$. With this notation, we define ${\rm
lm}(f):=X_m$, the \textit{leading monomial} of $f$; ${\rm
lc}(f):=a_m$, the \textit{leading coefficient} of $f$; ${\rm
lt}(f):=a_mX_m$, the \textit{leading term} of $f$; ${\rm exp}(f):={\rm exp}(X_m)$, the \textit{order} of $f$. Notice that $\deg(f):={\rm max}\{\deg(X_i)\}_{i=1}^m$. Finally, if $f=0$, then
${\rm lm}(0):=0$, ${\rm lc}(0):=0$, ${\rm lt}(0):=0$. We also
consider $X\succ 0$ for any $X\in {\rm Mon}(A)$. Thus, we extend $\succeq$ to ${\rm Mon}(A)\cup \{0\}$.
\end{enumerate}
\end{remark}

\begin{proposition}[{\cite[Theorem 7]{LezamaGallego2011}}]\label{coefficientes}
If $A$ is a polynomial ring with respect to the set of indeterminates $\{x_1,\dots,x_n\}$ and $R$ is the coefficient ring, then $A$ is an SPBW  extension of $R$ if and only if the following conditions hold:
\begin{enumerate}
\item[\rm (1)] For each $x^{\alpha}\in {\rm Mon}(A)$ and every non-zero element $r\in R$, there exist unique elements $r_{\alpha}:=\sigma^{\alpha}(r)\in R\ \backslash\ \{0\}$ and $p_{\alpha ,r}\in A$ such that $x^{\alpha}r=r_{\alpha}x^{\alpha}+p_{\alpha, r}$,  where $p_{\alpha ,r}=0$, or $\deg(p_{\alpha ,r})<|\alpha|$ if
$p_{\alpha , r}\neq 0$. If $r$ is left invertible,  so is
$r_\alpha$.

\item[\rm (2)]For each $x^{\alpha},x^{\beta}\in {\rm Mon}(A)$,  there exist unique elements $c_{\alpha,\beta}\in R\ \backslash\ \{0\}$ and $p_{\alpha,\beta}\in A$ such that $x^{\alpha}x^{\beta}=c_{\alpha,\beta}x^{\alpha+\beta}+p_{\alpha,\beta}$, where $c_{\alpha,\beta}$ is left invertible, $p_{\alpha,\beta}=0$, or $\deg(p_{\alpha,\beta})<|\alpha+\beta|$ if
$p_{\alpha,\beta}\neq 0$.
\end{enumerate}
\end{proposition}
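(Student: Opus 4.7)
The plan is to prove both implications by induction on total degree, with the freeness of $\mathrm{Mon}(A)$ as a left $R$-basis supplying all uniqueness claims.

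For the forward direction, assume $A$ is an SPBW extension. I would prove (1) and (2) by a simultaneous induction: (1) on $|\alpha|$, (2) on $|\alpha+\beta|$. The base case of (1) is condition (iii) of Definition \ref{gpbwextension}, which gives $x_i r = \sigma_i(r) x_i + \delta_i(r)$ with remainder $\delta_i(r)\in R$ of degree $0<1$; the base case of (2), namely $|\alpha+\beta|=2$ with $\alpha=e_j$ and $\beta=e_i$ (say $i<j$), is condition (iv). For the inductive step of (1), choose the least index $i$ with $\alpha_i\geq 1$ so that $x_i\cdot x^{\alpha-e_i}=x^\alpha$ already stands in standard form, and compute
\[
x^\alpha r = x_i\bigl(\sigma^{\alpha-e_i}(r) x^{\alpha-e_i} + p_{\alpha-e_i,r}\bigr) = \sigma_i(\sigma^{\alpha-e_i}(r))\, x^\alpha + \delta_i(\sigma^{\alpha-e_i}(r))\, x^{\alpha-e_i} + x_i\, p_{\alpha-e_i,r}.
\]
The leading coefficient is $\sigma^\alpha(r)$ as claimed, and the two remaining summands have total degree $<|\alpha|$ once $x_i\,p_{\alpha-e_i,r}$ is reduced to standard form using property (2) at strictly smaller size. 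The inductive step of (2) is analogous: split $x^\alpha x^\beta = x_i(x^{\alpha-e_i} x^\beta)$, apply (2) at size $|\alpha+\beta|-1$ to the inner product, and then apply (1) to move $x_i$ through the resulting coefficients. Uniqueness is immediate from the freeness of the monomial basis, and left invertibility of $\sigma^\alpha(r)$ and $c_{\alpha,\beta}$ follows because each $\sigma_i$ is an injective ring endomorphism and thus sends left invertible elements to left invertible elements.

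For the backward direction, the hypothesis that $A$ is a polynomial ring in $x_1,\dots,x_n$ with coefficient ring $R$ supplies axioms (i) and (ii) of Definition \ref{gpbwextension}. Specializing (1) to $\alpha=e_i$ yields $x_i r = \sigma_i(r) x_i + p_{e_i,r}$ with $p_{e_i,r}\in R$, so $c_{i,r}:=\sigma_i(r)\neq 0$ satisfies $x_i r - c_{i,r} x_i\in R$, establishing (iii). Specializing (2) to $\alpha=e_j$, $\beta=e_i$ (with $i<j$) gives $x_j x_i = c_{e_j,e_i}\, x_i x_j + p_{e_j,e_i}$ with $\deg(p_{e_j,e_i})<2$, so $p_{e_j,e_i}\in R+Rx_1+\cdots+Rx_n$, establishing (iv) with $d_{i,j}:=c_{e_j,e_i}$.

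The main obstacle is making the interlocking induction in the forward direction rigorous: reducing $x_i\, p_{\alpha-e_i,r}$ to standard form produces products $x_i\,(x_k\cdots)$ where $i$ need not be the least index appearing, so one must invoke (2) at smaller size \emph{during} the proof of (1). One has to check that the two inductions do not create a circular dependency and that, after all the reorderings, the leading coefficient reassembles into exactly the composition $\sigma^\alpha(r)=\sigma_1^{\alpha_1}\circ\cdots\circ\sigma_n^{\alpha_n}(r)$.
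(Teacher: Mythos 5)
This proposition is quoted in the paper from \cite[Theorem 7]{LezamaGallego2011} without proof, so your proposal can only be measured against the standard inductive argument, which is indeed the strategy you chose. The backward direction and the uniqueness claims (from freeness of ${\rm Mon}(A)$) are fine. However, two points in the forward direction are genuine gaps rather than routine bookkeeping.

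First, the induction scheme as you state it is circular, and you flag this yourself without resolving it. In the inductive step for (2) you write $x^{\alpha}x^{\beta}=x_i(x^{\alpha-e_i}x^{\beta})$, reduce the inner product at size $|\alpha+\beta|-1$, and then must put $x_i x^{\gamma}$ with $\gamma=\alpha-e_i+\beta$ into standard form; since $\beta$ may involve variables with index smaller than $i$, this is precisely an instance of (2) at the \emph{same} total degree $|\alpha+\beta|$, so an induction on total degree alone is not well founded. (The analogous step inside (1) is harmless, because there $x_i$ multiplies monomials of degree at most $|\alpha|-2$.) The repair is a finer induction: first prove (2) for a single-variable left factor $x_i x^{\beta}$ by induction on $|\beta|$ (or on the number of indices in $\beta$ smaller than $i$), using (iii), (iv) and the degree-one case of (1); then prove general (2) by induction on $|\alpha|$, and only then run your argument for (1). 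As submitted, the plan acknowledges the obstacle but does not carry out this strengthening, so the proof is incomplete at its central point.

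Second, the left invertibility of $c_{\alpha,\beta}$ is not justified by the reason you give. That each $\sigma_i$ carries left invertible elements to left invertible elements only helps once one knows the building blocks are left invertible, and the constants $d_{i,j}$ in Definition \ref{gpbwextension}(iv) are only assumed to be non-zero, not invertible. What saves the statement is that (iv) is imposed for \emph{both} orderings of each pair: for $i<j$ one has $x_jx_i=d_{i,j}x_ix_j+(\text{degree}\leq 1)$ and $x_ix_j=d_{j,i}x_jx_i+(\text{degree}\leq 1)$, and substituting one relation into the other and comparing the coefficient of the basis element $x_ix_j$ gives $d_{j,i}d_{i,j}=1$, so each $d_{i,j}$ is left invertible. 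Your induction must then also track that $c_{\alpha,\beta}$ is a product of elements of the form $\sigma^{\theta}(d_{i,j})$ (and units $1$), so that left invertibility propagates. Without this step the clause ``$c_{\alpha,\beta}$ is left invertible'' in (2) is unproved. The rest of your outline (leading coefficient reassembling to $\sigma^{\alpha}(r)$ when $i$ is the least index with $\alpha_i\geq 1$, non-vanishing of $\sigma^{\alpha}(r)$ by injectivity of the $\sigma_i$, and the specializations giving (iii) and (iv) in the converse) is correct.
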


\begin{proposition}[{\cite[Proposition 2.9 and Remark 2.10 iv)]{Reyes2015Baer}}] \label{Reyes2015Proposition2.9} If $\sigma(R)\langle x_1,\dots,x_n\rangle$ is an SPBW extension of $R$, then
\begin{itemize}
    \item[{\rm (1)}] If $\alpha=(\alpha_1,\dotsc, \alpha_n)\in \mathbb{N}^{n}$ and $r\in R$ with $r\neq 0$, then  
{\footnotesize{\begin{align*}
x^{\alpha}r = &\ x_1^{\alpha_1}x_2^{\alpha_2}\dotsb x_{n-1}^{\alpha_{n-1}}x_n^{\alpha_n}r = x_1^{\alpha_1}\dotsb x_{n-1}^{\alpha_{n-1}}\biggl(\sum_{j=1}^{\alpha_n}x_n^{\alpha_{n}-j}\delta_n(\sigma_n^{j-1}(r))x_n^{j-1}\biggr)\\
&\ + x_1^{\alpha_1}\dotsb x_{n-2}^{\alpha_{n-2}}\biggl(\sum_{j=1}^{\alpha_{n-1}}x_{n-1}^{\alpha_{n-1}-j}\delta_{n-1}(\sigma_{n-1}^{j-1}(\sigma_n^{\alpha_n}(r)))x_{n-1}^{j-1}\biggr)x_n^{\alpha_n}\\
&\ + x_1^{\alpha_1}\dotsb x_{n-3}^{\alpha_{n-3}}\biggl(\sum_{j=1}^{\alpha_{n-2}} x_{n-2}^{\alpha_{n-2}-j}\delta_{n-2}(\sigma_{n-2}^{j-1}(\sigma_{n-1}^{\alpha_{n-1}}(\sigma_n^{\alpha_n}(r))))x_{n-2}^{j-1}\biggr)x_{n-1}^{\alpha_{n-1}}x_n^{\alpha_n}\\
&\ + \dotsb + x_1^{\alpha_1}\biggl( \sum_{j=1}^{\alpha_2}x_2^{\alpha_2-j}\delta_2(\sigma_2^{j-1}(\sigma_3^{\alpha_3}(\sigma_4^{\alpha_4}(\dotsb (\sigma_n^{\alpha_n}(r))))))x_2^{j-1}\biggr)x_3^{\alpha_3}x_4^{\alpha_4}\dotsb x_{n-1}^{\alpha_{n-1}}x_n^{\alpha_n} \\
&\ + \sigma_1^{\alpha_1}(\sigma_2^{\alpha_2}(\dotsb (\sigma_n^{\alpha_n}(r))))x_1^{\alpha_1}\dotsb x_n^{\alpha_n}, \ \ \ \ \ \ \ \ \ \ \sigma_j^{0}:={\rm id}_R\ \ {\rm for}\ \ 1\le j\le n.
\end{align*}}}
    
\item[{\rm (2)}] If $a_i, b_j\in R$ and $X_i:=x_1^{\alpha_{i1}}\dotsb x_n^{\alpha_{in}},\ Y_j:=x_1^{\beta_{j1}}\dotsb x_n^{\beta_{jn}}$, when we compute every summand of $a_iX_ib_jY_j$ we obtain products of the coefficient $a_i$ with several evaluations of $b_j$ in $\sigma$'s and $\delta$'s depending on the coordinates of $\alpha_i$. This assertion follows from the expression:
\begin{align*}
a_iX_ib_jY_j = &\ a_i\sigma^{\alpha_{i}}(b_j)x^{\alpha_i}x^{\beta_j} + a_ip_{\alpha_{i1}, \sigma_{i2}^{\alpha_{i2}}(\dotsb (\sigma_{in}^{\alpha_{in}}(b_j)))} x_2^{\alpha_{i2}}\dotsb x_n^{\alpha_{in}}x^{\beta_j} \\
&\ + a_i x_1^{\alpha_{i1}}p_{\alpha_{i2}, \sigma_3^{\alpha_{i3}}(\dotsb (\sigma_{{in}}^{\alpha_{in}}(b_j)))} x_3^{\alpha_{i3}}\dotsb x_n^{\alpha_{in}}x^{\beta_j} \\
&\ + a_i x_1^{\alpha_{i1}}x_2^{\alpha_{i2}}p_{\alpha_{i3}, \sigma_{i4}^{\alpha_{i4}} (\dotsb (\sigma_{in}^{\alpha_{in}}(b_j)))} x_4^{\alpha_{i4}}\dotsb x_n^{\alpha_{in}}x^{\beta_j}\\
&\ + \dotsb + a_i x_1^{\alpha_{i1}}x_2^{\alpha_{i2}} \dotsb x_{i(n-2)}^{\alpha_{i(n-2)}}p_{\alpha_{i(n-1)}, \sigma_{in}^{\alpha_{in}}(b_j)}x_n^{\alpha_{in}}x^{\beta_j} \\
&\ + a_i x_1^{\alpha_{i1}}\dotsb x_{i(n-1)}^{\alpha_{i(n-1)}}p_{\alpha_{in}, b_j}x^{\beta_j}.
\end{align*}
\end{itemize}
\end{proposition}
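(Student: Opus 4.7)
The plan is to prove part (1) by a two-stage induction, and then derive part (2) as a direct consequence. The first stage is the single-variable version: by induction on $k \geq 0$, I would establish the identity
\[
x_i^k r = \sigma_i^k(r)\, x_i^k + \sum_{j=1}^{k} x_i^{k-j}\, \delta_i(\sigma_i^{j-1}(r))\, x_i^{j-1}.
\]
The base case $k=1$ is exactly the relation $x_i r = \sigma_i(r)x_i + \delta_i(r)$ coming from condition (iii) of Definition \ref{gpbwextension}. The inductive step follows from writing $x_i^{k+1} r = x_i (x_i^k r)$, inserting the inductive hypothesis, and then applying $x_i\,\cdot$ to each summand: the leading piece $x_i \sigma_i^k(r) x_i^k$ produces $\sigma_i^{k+1}(r) x_i^{k+1}$ together with the new $j=k+1$ summand $\delta_i(\sigma_i^k(r)) x_i^k$, while each old summand $x_i^{k-j} \delta_i(\sigma_i^{j-1}(r)) x_i^{j-1}$ becomes $x_i^{(k+1)-j} \delta_i(\sigma_i^{j-1}(r)) x_i^{j-1}$ by concatenation of $x_i$'s.

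Second stage: multi-variable. With the single-variable identity in hand, I would iterate from right to left through $x^\alpha = x_1^{\alpha_1} \cdots x_n^{\alpha_n}$. Starting from $x_n^{\alpha_n} r$, the identity produces the main term $\sigma_n^{\alpha_n}(r)\, x_n^{\alpha_n}$ plus a $\delta_n$-remainder, and pre-multiplication by $x_1^{\alpha_1} \cdots x_{n-1}^{\alpha_{n-1}}$ leaves this remainder in exactly the form displayed in the statement. The single-variable identity is then applied again to move $\sigma_n^{\alpha_n}(r)$ past $x_{n-1}^{\alpha_{n-1}}$, yielding a new main term $\sigma_{n-1}^{\alpha_{n-1}}(\sigma_n^{\alpha_n}(r))\, x_{n-1}^{\alpha_{n-1}}$ (followed by $x_n^{\alpha_n}$ on the right) plus a $\delta_{n-1}$-remainder. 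Iterating this procedure for $k = n, n-1, \dotsc, 1$ yields the full decomposition, and the lone surviving main term is the \emph{pure} $\sigma$-summand $\sigma_1^{\alpha_1}(\cdots(\sigma_n^{\alpha_n}(r)))\, x_1^{\alpha_1} \cdots x_n^{\alpha_n}$.

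For part (2), the strategy is to apply part (1) to $X_i b_j = x^{\alpha_i} b_j$, identify each single-variable remainder $\sum_{j=1}^{\alpha_{ik}} x_k^{\alpha_{ik}-j} \delta_k(\sigma_k^{j-1}(r)) x_k^{j-1}$ with the shorthand $p_{\alpha_{ik}, r}$ from Proposition \ref{coefficientes} applied to the multi-index $(0, \dotsc, \alpha_{ik}, \dotsc, 0)$, and then left-multiply by $a_i$ and right-multiply by $Y_j = x^{\beta_j}$. Each summand in the statement of part (2) then corresponds to one of these $k$-th blocks with the correct nested $\sigma$'s applied to $b_j$. The main obstacle is purely bookkeeping: tracking the nested compositions $\sigma_k^{j-1}(\sigma_{k+1}^{\alpha_{k+1}}(\cdots(\sigma_n^{\alpha_n}(r))))$ in the exact order dictated by the right-to-left iteration, since the $\sigma_i$'s need not commute pairwise. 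A minor subtlety to flag is that when some $\alpha_k = 0$, the corresponding sum $\sum_{j=1}^{0}$ is empty and contributes nothing, matching the fact that no commutation relation is invoked at the $x_k$-slot.
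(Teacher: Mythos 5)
Your proof is correct and is essentially the standard argument: the paper itself states this proposition without proof (it is quoted from \cite[Proposition 2.9 and Remark 2.10 iv)]{Reyes2015Baer}), and that source proceeds by exactly this kind of induction on the defining relation, first in one variable and then iterating from right to left through $x^{\alpha}$, with part (2) obtained by bracketing the resulting remainders $p_{\alpha_{ik},-}$ between $a_i$ and $x^{\beta_j}$ as you describe. One small point of precision: the relation $x_i r = \sigma_i(r)x_i + \delta_i(r)$ used in your base case comes from \cite[Proposition 3]{LezamaGallego2011}, stated in Section \ref{Definitions} just after Definition \ref{gpbwextension}, rather than directly from condition (iii) of that definition, which only asserts $x_i r - c_{i,r}x_i \in R$.
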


In the setting of SPBW extensions, Reyes and Su\'arez \cite{ReyesSuarez2018-3} defined the $\Sigma$-rigidness condition as follows: If $\Sigma=\{\sigma_1,\dots,\sigma_n\}$ is a family of endomorphisms of $R$, then $\Sigma$ is called a \emph{rigid endomorphisms family} if $a\sigma^{\alpha}(a)= 0$ implies that $a = 0$, where $a\in R$ and $\alpha\in \mathbb{N}^n$, and $R$ is a $\Sigma$-\emph{rigid} if there exists a rigid endomorphisms family $\Sigma$ of $R$ \cite[Definition 3.1]{ReyesSuarez2018-3}. Hashemi et al. \cite{HashemiKhalilAlhevaz2017} and Reyes and Su\'arez \cite {ReyesSuarez2018UMA} introduced independently $(\Sigma, \Delta)$-{\em compatible rings} as a generalization of $(\sigma, \delta)$-compatible rings. Examples, ring and module theoretic properties of these objects have been investigated \cite{HashemiKhalilAlhevaz2017, HashemiKhalilAlhevaz2019, LouzariReyes2020, LunquenJingwang2011, OuyangBirkenmeier2012, ReyesSuarez2018UMA, ReyesSuarez2020}. 

Nikmehr and Azadi \cite{NikmehrAzadi2020} considered the $\sigma$-compatibility condition for their study of the diameter and the girth of the nilpotent graph $\Gamma_N(R[x;\sigma])$. As expected, $(\Sigma,\Delta)$-compatible rings must be considered to study the interplay of ring-theoretic properties of an SPBW extension $\sigma(R)\langle x_1,\dots,x_n\rangle$ and the graph-theoretical properties of its corresponding nilpotent graph ${\rm \Gamma}_N(\sigma(R)\langle x_1,\dots,x_n\rangle)$. Before, we recall some preliminary results on compatible rings.

\begin{definition}[{\cite[Definition 3.1]{HashemiKhalilAlhevaz2017}}; {\cite[Definition 3.2]{ReyesSuarez2018UMA}}]\label{Definition3.52008}
$R$ is called {\it $\Sigma$-compatible} if for all $a, b \in R$, we obtain that $a\sigma^{\alpha}(b) = 0$ if and only if $ab = 0$, where $\alpha \in \mathbb{N}^n$; $R$ is {\it $\Delta$-compatible} if for all $a, b \in R$, we have that $ab = 0$ implies that $a\delta^{\beta}(b)=0$, where $\beta \in \mathbb{N}^n$. If $R$ is both $\Sigma$-compatible and $\Delta$-compatible, then $R$ is called {\it $(\Sigma, \Delta)$-compatible}.
\end{definition}

The following proposition is the natural generalization of \cite[Lemma 2.1]{HashemiMoussavi2005}.

\begin{proposition}[{\cite[Proposition 3.8]{ReyesSuarez2018UMA}}] \label{colosss}
If $R$ is $(\Sigma,\Delta)$-compatible, then for every $a, b
\in R$, the following assertions hold:
\begin{enumerate}
\item [\rm (1)] if $ab=0$, then $a\sigma^{\theta}(b) = \sigma^{\theta}(a)b=0$, for each $\theta\in \mathbb{N}^{n}$.

\item [\rm (2)] If $\sigma^{\beta}(a)b=0$ for some $\beta\in \mathbb{N}^{n}$, then $ab=0$.

\item [\rm (3)] If $ab=0$, then $\sigma^{\theta}(a)\delta^{\beta}(b)= \delta^{\beta}(a)\sigma^{\theta}(b) = 0$, for every $\theta, \beta\in \mathbb{N}^{n}$.
\end{enumerate}
\end{proposition}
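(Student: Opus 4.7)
The plan is to derive each assertion by combining the $\Sigma$- and $\Delta$-compatibility hypotheses with the fact that every $\sigma_i$ is an injective ring endomorphism and every $\delta_i$ is a $\sigma_i$-derivation. For item (1), the equality $a\sigma^{\theta}(b)=0$ is immediate from the definition of $\Sigma$-compatibility. For $\sigma^{\theta}(a)b=0$, I would apply $\sigma^{\theta}$ to $ab=0$ (a ring homomorphism, so it sends the product to the product) to obtain $\sigma^{\theta}(a)\sigma^{\theta}(b)=0$, and then invoke $\Sigma$-compatibility applied to the pair $(\sigma^{\theta}(a),b)$ to conclude $\sigma^{\theta}(a)b=0$.

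For item (2), assume $\sigma^{\beta}(a)b=0$. Applying $\Sigma$-compatibility to $(\sigma^{\beta}(a),b)$ in the forward direction yields $\sigma^{\beta}(a)\sigma^{\beta}(b)=0$, which is exactly $\sigma^{\beta}(ab)=0$. Since each $\sigma_i$ is injective by the definition of an SPBW extension (the remark just after Definition \ref{gpbwextension}), the composition $\sigma^{\beta}$ is injective, so $ab=0$.

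For item (3), two dual statements must be proved. The first, $\sigma^{\theta}(a)\delta^{\beta}(b)=0$, follows by a clean two-step reduction: starting from $ab=0$, iterate $\Delta$-compatibility factorwise on the multi-index $\beta$ to obtain $a\delta^{\beta}(b)=0$, and then apply item (1) to the pair $(a,\delta^{\beta}(b))$. The second identity, $\delta^{\beta}(a)\sigma^{\theta}(b)=0$, is the main obstacle, because $\Delta$-compatibility only lets one insert $\delta$'s in the right-hand argument, whereas here the $\delta$'s sit on the left. The key device is the $\sigma_i$-derivation rule
\[
\delta_i(ab) = \sigma_i(a)\delta_i(b) + \delta_i(a)b.
\]
If $ab=0$, the left side vanishes; meanwhile $\Delta$-compatibility gives $a\delta_i(b)=0$, and then item (1) yields $\sigma_i(a)\delta_i(b)=0$. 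The identity above therefore forces $\delta_i(a)b=0$. Iterating this observation (replacing $a$ at each step by its image under one additional $\delta_j$, noting that the hypothesis $cb=0$ is reproduced after each step) gives by induction on $|\beta|$ that $\delta^{\beta}(a)b=0$. A final application of item (1) to the pair $(\delta^{\beta}(a),b)$ produces $\delta^{\beta}(a)\sigma^{\theta}(b)=0$, completing the argument.
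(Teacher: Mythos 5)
Your proof is correct and follows essentially the same route as the cited source (it is the multi-index analogue of Hashemi--Moussavi's Lemma 2.1, which is exactly how \cite[Proposition 3.8]{ReyesSuarez2018UMA} argues): apply $\sigma^{\theta}$ and use compatibility on the pair $(\sigma^{\theta}(a),b)$ for (1), injectivity of $\sigma^{\beta}$ for (2), and the identity $\delta_i(ab)=\sigma_i(a)\delta_i(b)+\delta_i(a)b$ iterated over the multi-index for (3). As a minor remark, the injectivity you import from the SPBW setting is in fact automatic from $\Sigma$-compatibility itself (take $a=1$ in the definition), so the appeal to the remark after the definition of SPBW extensions is not even needed.
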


If $R$ is $\Sigma$-rigid, it follows that $R$ is $(\Sigma, \Delta)$-compatible \cite[Proposition 3.4]{ReyesSuarez2018UMA}, but the converse does not hold \cite[Example 3.6] {ReyesSuarez2018UMA}. If $R$ is reduced, then $\Sigma$-rigid and $(\Sigma, \Delta)$-compatible coincide and if $A=\sigma(R)\langle x_1,\dots,x_n\rangle$ is a SPBW extension of $R$ then $A$ is reduced \cite[Theorem 3.9]{ReyesSuarez2018UMA}.

We present some examples of SPBW extensions over compatible rings.

\begin{example} 
 Commutative polynomial rings, PBW extensions, the algebra of linear partial differential operators, the algebra of linear partial shift operators, the algebra of linear partial difference operators, and the algebra of linear partial $q$-differential operators, and the class of diffusion algebras (for a detailed description, see \cite{Fajardoetal2020, Higuera2020, HigueraReyes2022}). Other examples include: Weyl algebras, multiplicative analogue of the Weyl algebra, some quantum Weyl algebras as $A_2(J_{a,b})$, the quantum algebra $U'(\mathfrak{so}(3, \Bbbk))$, the family of 3-dimensional skew polynomial algebras; Woronowicz algebra $W_v(\mathfrak{sl}(2, \Bbbk))$, the complex algebra $V_q({\mathfrak{sl}}_3(C))$, the Hayashi algebra $W_q(J)$, $q$-Heisenberg algebra ${\bf H}_n(q)$, and others.
\end{example}

Reyes and Su\'arez \cite{ReyesSuarez2020} introduced the {\em weak} $(\Sigma, \Delta)$-{\em compatible rings} as a generalization of $(\Sigma, \Delta)$-compatible rings and {\em weak} $(\sigma, \delta)$-{\em compatible rings} considered by Ouyang and Liu \cite{LunquenJingwang2011}. A ring $R$ is called {\it weak $\Sigma$-compatible} if for each $a, b \in R$ we obtain that $a\sigma^{\alpha}(b)\in {\rm nil}(R)$ if and only if $ab \in {\rm nil}(R)$, where $\alpha \in \mathbb{N}^n$; $R$ is {\it weak $\Delta$-compatible} if for each $a, b \in R$ we have that $ab \in {\rm nil}(R)$ implies that $a\delta^{\beta}(b)\in {\rm nil}(R)$, where $\beta \in \mathbb{N}^n$. If $R$ is both weak $\Sigma$-compatible and weak $\Delta$-compatible, then $R$ is called {\it weak $(\Sigma, \Delta)$-compatible}. 

Ouyang and Liu \cite{LunquenJingwang2011} characterized the nilpotent elements of a skew polynomial ring over a weak $(\sigma, \delta)$-compatible ring \cite[Lemma 2.13]{LunquenJingwang2011}. Reyes and Su\'arez \cite{ReyesSuarez2020} generalized this result for SPBW extensions.

\begin{proposition}[{\cite[Theorem 4.6]{ReyesSuarez2020}}]\label{ReyesSuarez2019-2Theorem4.6} If $R$ is a weak $(\Sigma,\Delta)$-compatible and NI ring, then $f = \sum_{i=0}^m a_iX_i \in {\rm nil}(\sigma(R)\langle x_1,\dots,x_n\rangle)$ if and only if $a_i\in {\rm nil}(R)$, for all $0 \le i \le m$.	
\end{proposition}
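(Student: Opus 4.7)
The plan is to pass to the quotient $\bar A := A/\mathrm{nil}(R)A$, which under our hypotheses will be a reduced SPBW extension. First, I would verify that each $\sigma_i$ and $\delta_i$ preserves $\mathrm{nil}(R)$, by taking $a = 1$ in the weak $(\Sigma, \Delta)$-compatibility conditions; that $\mathrm{nil}(R)$ is a two-sided ideal of $R$ (since $R$ is NI); and --- using Proposition \ref{Reyes2015Proposition2.9}(2) to expand products of standard monomials by elements of $R$ --- that $\mathrm{nil}(R)A$ is a two-sided ideal of $A$ consisting exactly of those $\sum b_j Y_j$ with $b_j \in \mathrm{nil}(R)$ for every $j$. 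Then $\bar A \cong \sigma(\bar R)\langle \bar x_1, \dotsc, \bar x_n\rangle$ with $\bar R := R/\mathrm{nil}(R)$, and weak compatibility of $R$ lifts to full $(\bar\Sigma, \bar\Delta)$-compatibility of $\bar R$ (the ``$\in \mathrm{nil}(R)$'' conditions become ``$=0$'' modulo $\mathrm{nil}(R)$); since $\bar R$ is reduced, this is equivalent to $\bar\Sigma$-rigidity, and \cite[Theorem 3.9]{ReyesSuarez2018UMA} then yields that $\bar A$ is reduced.

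The forward direction is then immediate: any $f \in \mathrm{nil}(A)$ projects to a nilpotent element in the reduced ring $\bar A$, so $\bar f = 0$ and therefore $a_i \in \mathrm{nil}(R)$ for every $i$.

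The backward direction --- that $\mathrm{nil}(R)A$ is a nil ideal --- is the main obstacle. Given $f = \sum a_iX_i$ with $a_i^{s_i}=0$, I would expand $f^N$ into standard form via Proposition \ref{Reyes2015Proposition2.9}(2), observing that every resulting coefficient is an $R$-multiple of a length-$N$ product whose factors are $\sigma^{\gamma}$- and $\delta^{\beta}$-evaluations of the $a_i$'s, all of which lie in $\mathrm{nil}(R)$. For $N$ sufficiently large in terms of the $s_i$ and $m$, weak $(\Sigma,\Delta)$-compatibility --- applied iteratively in the spirit of Proposition \ref{colosss} to shuffle $\sigma$'s and $\delta$'s past each other within $\mathrm{nil}(R)$ --- combined with a pigeonhole argument forces each coefficient to contain a subword $a_j^{s_j}=0$ and hence to vanish, yielding $f^N = 0$. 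The delicate heart of the argument is making this ``noncommutative pigeonhole'' rigorous: rearranging a long product of nilpotent elements outside the reduced quotient is not automatic, and this is exactly where the weak compatibility hypothesis is used most essentially; the construction directly generalizes the Ore-extension analysis of \cite[Lemma 2.13]{LunquenJingwang2011} to the SPBW setting.
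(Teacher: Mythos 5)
The paper itself gives no proof of this Proposition: it is quoted from \cite[Theorem 4.6]{ReyesSuarez2020}, whose skew-polynomial antecedent is \cite[Lemma 2.13]{LunquenJingwang2011}, so the comparison is with that source rather than with an argument in this paper. Your forward direction is essentially sound: $\mathrm{nil}(R)$ is a $(\Sigma,\Delta)$-invariant ideal of the NI ring $R$, $\mathrm{nil}(R)A$ consists exactly of the polynomials with all coefficients in $\mathrm{nil}(R)$, and weak compatibility of $R$ does become full $(\bar\Sigma,\bar\Delta)$-compatibility of the reduced ring $\bar R=R/\mathrm{nil}(R)$, so $\bar A$ is reduced by \cite[Theorem 3.9]{ReyesSuarez2018UMA} and nilpotent $f$ must have all coefficients in $\mathrm{nil}(R)$. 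One point you should not gloss over: for $\bar A$ to be an SPBW extension of $\bar R$ you also need the structure constants $d_{i,j}$ (and $c_{i,r}$ for $r\notin\mathrm{nil}(R)$) to stay non-zero modulo $\mathrm{nil}(R)$; this is not automatic from the definition and must either be argued or bypassed (e.g.\ by the usual leading-coefficient induction, which proves this direction without forming $\bar A$ at all).

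The backward direction, which you yourself flag as the heart, is a genuine gap, and it cannot be closed along the lines you sketch. Expanding $f^{N}$ via Proposition \ref{Reyes2015Proposition2.9}, each coefficient is a sum of words in which the $N$ twisted copies of the $a_i$'s are separated by structure constants ($d_{i,j}$, the $r_k^{(i,j)}$, and their $\sigma$- and $\delta$-images); the repeated occurrences of a fixed $a_j$ are neither adjacent nor twisted by the same $\sigma^{\gamma}\delta^{\beta}$, and weak $(\Sigma,\Delta)$-compatibility only converts statements of the form ``$uv\in\mathrm{nil}(R)$'' into other statements of the same form --- it never produces an actual zero. So the pigeonhole yields only what you already know (every coefficient of $f^{N}$ lies in $\mathrm{nil}(R)$, since $\mathrm{nil}(R)$ is an ideal), not $f^{N}=0$. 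In fact no argument from the stated hypotheses alone can succeed: taking $\sigma_i=\mathrm{id}_R$ and $\delta_i=0$, every ring is weak $(\Sigma,\Delta)$-compatible and $A=R[x_1,\dotsc,x_n]$, so the backward implication asserts that $\mathrm{nil}(R)[x]$ is nil for every NI ring $R$, i.e.\ that polynomial rings over nil rings are nil. This is Amitsur's problem, answered negatively by Smoktunowicz (J. Algebra \textbf{233} (2000), 427--436); unitalizing her nil algebra gives an NI, trivially compatible ring with a polynomial whose coefficients are all nilpotent but which is not nilpotent. Hence your ``noncommutative pigeonhole'' is not merely delicate but unavailable: this implication has to be imported from \cite[Theorem 4.6]{ReyesSuarez2020} with whatever hypotheses and argument are actually employed there, and the statement in the generality reproduced here should be treated with corresponding caution.
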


Proposition \ref{ReyesSuarez2019-2Theorem4.6} generalizes the lemma presented by Ouyang and Birkenmeier \cite{OuyangBirkenmeier2012} for skew polynomial rings \cite[Lemma 2.6]{OuyangBirkenmeier2012}, and Ouyang and Liu \cite{OuyangLiu2012} for differential polynomials rings \cite[Lemma 2.12]{OuyangLiu2012}. In addition, Proposition \ref{ReyesSuarez2019-2Theorem4.6} implies the following corollary.

\begin{corollary}\label{Corollarynilpotent} If $R$ is $(\Sigma,\Delta)$-compatible and NI, then the following statements hold:
\begin{enumerate}
\item [\rm (1)] ${\rm nil}(\sigma(R)\langle x_1,\dots,x_n\rangle)$ is an ideal and ${\rm nil}(\sigma(R)\langle x_1,\dots,x_n\rangle) = {\rm nil}(R)\langle x_1,\dotsc, x_n\rangle$.

\item [\rm (2)] $f = \sum_{i=0}^m a_iX_i \in {\rm nil}(\sigma(R)\langle x_1,\dots,x_n\rangle)$ if and only if $a_i\in {\rm nil}(R)$, for each $0 \le i \le n$. \label{MSc2.3.21}
\end{enumerate}
\end{corollary}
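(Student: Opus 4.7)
The plan is to derive Corollary \ref{Corollarynilpotent} directly from Proposition \ref{ReyesSuarez2019-2Theorem4.6}. The first preliminary step is to verify that $(\Sigma,\Delta)$-compatibility implies weak $(\Sigma,\Delta)$-compatibility. For this, I would start by noting that each $\sigma_i$ preserves ${\rm nil}(R)$ automatically, since $a^k=0$ yields $\sigma_i(a)^k=\sigma_i(a^k)=0$. Given $a,b\in R$ with $(ab)^k=0$, iterated application of Proposition \ref{colosss} lets me move $\sigma^{\alpha}$ across each factor in the expansion of $ababab\cdots ab$, showing $(a\sigma^{\alpha}(b))^k\in{\rm nil}(R)$; the reverse direction is identical. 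The same strategy, combined with Proposition \ref{colosss}(3), handles the $\delta^{\beta}$-version. With weak $(\Sigma,\Delta)$-compatibility in hand, Proposition \ref{ReyesSuarez2019-2Theorem4.6} applies verbatim and delivers statement (2) of the corollary.

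Statement (1) then splits into two assertions. The set equality ${\rm nil}(\sigma(R)\langle x_1,\dots,x_n\rangle)={\rm nil}(R)\langle x_1,\dots,x_n\rangle$ is a direct rephrasing of (2). For the ideal property, I would check closure under addition (immediate, since ${\rm nil}(R)$ is an ideal of $R$ by the NI hypothesis) and closure under multiplication by any $g=\sum b_jY_j\in A$. The latter follows by expanding $fg$ and $gf$ in standard monomials via Proposition \ref{Reyes2015Proposition2.9}(2): every resulting coefficient is an $R$-linear combination of products involving the $a_i$ together with iterated $\sigma_j$'s and $\delta_j$'s applied to the $b_j$. Since $a_i\in{\rm nil}(R)$ and ${\rm nil}(R)$ is an ideal of $R$ stable under the endomorphisms $\sigma_j$ and the $\sigma_j$-derivations $\delta_j$, each such coefficient lies in ${\rm nil}(R)$, which places the product back in ${\rm nil}(R)\langle x_1,\dots,x_n\rangle$.

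The main obstacle is justifying $\delta_i({\rm nil}(R))\subseteq{\rm nil}(R)$, which does not follow formally from either $\Sigma$-compatibility or endomorphism properties alone. The argument I would use applies the Leibniz-type identity $\delta_i(a^k)=\sum_{j=0}^{k-1}\sigma_i^{j}(a)\,\delta_i(a)\,a^{k-1-j}$, combined with $\Delta$-compatibility applied to $a\cdot a^{k-1}=0$, to induct on the nilpotency index $k$ and extract $\delta_i(a)\in{\rm nil}(R)$. Once this $\delta_i$-invariance is established, everything else in the proof is routine bookkeeping, and statement (2) upgrades the set equality of (1) to the ideal statement.
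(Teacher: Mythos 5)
Your overall route coincides with the paper's: the corollary is obtained by feeding the hypotheses into Proposition \ref{ReyesSuarez2019-2Theorem4.6}, the only substantive point being that a $(\Sigma,\Delta)$-compatible NI ring is weak $(\Sigma,\Delta)$-compatible (the paper treats this, and the ideal assertion, as known from the cited literature). Your handling of statement (1) is sound: the set equality is indeed a restatement of (2), closure under right multiplication only needs ${\rm nil}(R)$ to be an ideal, and closure under left multiplication reduces, via Proposition \ref{Reyes2015Proposition2.9}(2), to the $\sigma$- and $\delta$-stability of ${\rm nil}(R)$, which you correctly isolate as the non-formal ingredient (modulo the slip that the Leibniz identity is $\delta_i(a^k)=\sum_{j=0}^{k-1}\sigma_i(a^j)\,\delta_i(a)\,a^{k-1-j}$, i.e. $\sigma_i(a)^j$, not $\sigma_i^j(a)$).

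The genuine gap is in your verification of weak $(\Sigma,\Delta)$-compatibility. For the $\Sigma$-part, ``moving $\sigma^{\alpha}$ across each factor'' of $(ab)^k=0$ does not directly yield $(a\sigma^{\alpha}(b))^k=0$: iterating Proposition \ref{colosss} gives $c\,\sigma^{\alpha}(c)\,\sigma^{2\alpha}(c)\cdots\sigma^{(k-1)\alpha}(c)=0$ with $c=a\sigma^{\alpha}(b)$, and one still needs a stripping argument (repeatedly changing the twist on the rightmost factor and absorbing it into its neighbour) to reach $c^k=0$; this can be repaired, but it is not what you wrote. More seriously, the claims that ``the reverse direction is identical'' and that ``the same strategy, combined with Proposition \ref{colosss}(3), handles the $\delta^{\beta}$-version'' do not hold as stated: $\delta^{\beta}$ is not multiplicative and $\Delta$-compatibility is only one-directional, so $\delta^{\beta}$ cannot be pushed across the factors of $(ab)^k$, and in the reverse $\Sigma$-direction the occurrences of $\sigma^{\alpha}(b)$ sit in the interior of the word $(a\sigma^{\alpha}(b))^k$, where compatibility---which only untwists a full left or right factor of a two-factor zero product---does not apply. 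These are precisely the places where the NI hypothesis is normally brought in (e.g. through the $\sigma$- and $\delta$-invariant, completely prime minimal primes of a compatible NI/2-primal ring, or the reduced quotient $R/{\rm nil}(R)$), and your weak-compatibility argument never uses NI, which is a sign that the step as described would not go through. Either supply this argument or simply cite the known fact that $(\Sigma,\Delta)$-compatible NI rings are weak $(\Sigma,\Delta)$-compatible, as the paper implicitly does; with that in place the rest of your proposal is correct.
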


\section{Nilpotent graph theory of SPBW extensions}\label{Nilpotentgraph}

This section contains the original results of the paper. We characterize the diameter and girth of the nilpotent graph of SPBW extensions over $(\Sigma, \Delta)$-compatible and 2-primal rings. We start by recalling some preliminary results.

Redmond \cite{Redmond2002} defined the zero-divisor graph $\Gamma(R)$, and proved that it is always a connected graph with diameter at most 3 for any ring $R$. This fundamental result is an important tool in the study of zero-divisor graphs.

\begin{proposition}[{\cite[Theorem 3.2]{Redmond2002}}]\label{RedmondTheorem3.2} $\Gamma(R)$ is connected and ${\rm diam}(\Gamma (R))\leq 3$.
\end{proposition}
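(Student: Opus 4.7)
The plan is to show by case analysis that any two distinct vertices of $\Gamma(R)$ can be joined by a path of length at most three; connectedness will then follow. Fix distinct vertices $a,b\in Z^{*}(R)$. If $ab=0$ or $ba=0$ then $\{a,b\}$ is already an edge, so $d(a,b)=1$. Hence I may assume from now on that $ab\neq 0$ and $ba\neq 0$, and aim to produce either a common neighbour of $a$ and $b$ (giving $d(a,b)\leq 2$) or a path through two intermediate vertices (giving $d(a,b)\leq 3$).

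Since $a,b\in Z^{*}(R)$, I can pick non-zero annihilators: there exist $x,y\in R\setminus\{0\}$ with $xa=0$ or $ax=0$, and $yb=0$ or $by=0$. By symmetry among the sidedness patterns it suffices to analyse one, so take $ax=0$ and $yb=0$. The key observation is that the product $xy$ then satisfies $a(xy)=(ax)y=0$ and $(xy)b=x(yb)=0$ by associativity. Thus, provided $xy\neq 0$ and $xy\notin\{a,b\}$, the vertex $xy$ is adjacent to both $a$ and $b$, so $d(a,b)\leq 2$ via the path through $xy$. If instead $xy=0$ while $x\neq y$ and $x,y\notin\{a,b\}$, then the sequence $a, x, y, b$ is a path of length $3$.

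The only genuine obstacle is handling the degenerate collisions in which one of $x, y, xy$ equals $0$, $a$, or $b$. Each such collision forces a concrete algebraic identity: for instance, $x=a$ together with $ax=0$ gives $a^{2}=0$, placing $a$ in its own annihilator; and $xy=a$ combined with $(xy)b=0$ would contradict the standing assumption $ab\neq 0$. My approach is to replace the offending annihilator by an alternative element of the same one-sided annihilator of $a$ or $b$, exploiting that these annihilators are non-zero additive subgroups of $R$ and therefore contain many candidates. Typical substitutions such as $x\mapsto x+a$, or switching to an annihilator on the other side, preserve the associativity calculation above and eliminate the collision.

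The hard part, and the reason the full argument runs through a short table, is the tedious bookkeeping: there are four sidedness patterns (left versus right annihilators of $a$ and of $b$) and within each one a finite list of degenerate configurations. In every branch one must verify that the chosen substitution yields a non-zero element that still annihilates $a$ or $b$ on the appropriate side and remains distinct from $a$ and $b$. The standing assumptions $ab\neq 0$ and $ba\neq 0$, together with the additive structure of the annihilators, are precisely what rule out the bad branches and force each case to terminate with a path of length at most three, yielding ${\rm diam}(\Gamma(R))\leq 3$ and, in particular, connectedness.
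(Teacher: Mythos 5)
The paper itself offers no proof of this proposition (it is quoted from Redmond's Theorem 3.2), so your attempt has to be judged on its own. It contains a genuine gap: the reduction ``by symmetry among the sidedness patterns it suffices to analyse one, so take $ax=0$ and $yb=0$'' is false. The four patterns split, up to swapping $a,b$ and passing to the opposite ring, into two genuinely different cases: the mixed case ($ax=0$, $yb=0$, or $xa=0$, $by=0$), and the same-sided case (e.g. $ax=0$ and $by=0$, where $a$ and $b$ may only be one-sided zero divisors, so you cannot simply ``switch to an annihilator on the other side''). You analysed only the mixed case, which is exactly the case where the product trick works. In the same-sided case $ax=0$, $by=0$ your candidate $xy$ satisfies $a(xy)=(ax)y=0$, but there is no reason whatever for $(xy)b=0$ or $b(xy)=0$, and $yx$ is no better; likewise, if $xy=0$ the path $a-x-y-b$ happens to survive, but when $xy\neq 0$ your construction produces nothing adjacent to $b$. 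Since rings with elements that are left but not right zero divisors exist, this case cannot be argued away, and it is precisely where Redmond's proof does real work: there one uses intermediate vertices built from $b$ itself, e.g. $c=xb$, for which $ac=(ax)b=0$ and $cy=x(by)=0$, giving $a-xb-y-b$ of length $3$ (or $a-x-b$ when $xb=0$), again with collision checks.

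A secondary weakness is the treatment of the degenerate collisions. The substitution $x\mapsto x+a$ does not in general preserve the relation $ax=0$ (one gets $a(x+a)=a^2$, which need not vanish), and, as noted above, ``switching to an annihilator on the other side'' is not always available. These collisions can be handled (e.g. the case $x=a$ forces $a^2=0$, after which elements such as $ab$ serve as intermediate vertices since $a(ab)=0$ and $(ab)y=a(by)=0$), but the argument has to be made case by case rather than by the generic substitutions you invoke. So the overall strategy---produce a common neighbour or a two-step bridge from chosen annihilators---is the right one and matches Redmond's, but as written the proof covers only half the sidedness configurations and waves at the collision analysis that is the actual content of the theorem.
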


Nikmehr and Azadi \cite{NikmehrAzadi2020} showed that if $R$ is an NI ring with at least one non-zero nilpotent element, the corresponding graph $\Gamma_N(R)$ is connected, having diameter bounded by two and girth equal to either three or infinite. In addition, $\Gamma_N(R)$ is a complete graph when $R$ is isomorphic to the direct product $\mathbb{Z}_2 \times \mathbb{Z}_2$.

\begin{proposition}\label{Remark2.6Theorem2.7}
\begin{enumerate}
    \item[{\rm (1)}] \cite[Remark 2.6]{NikmehrAzadi2020} If $R$ is an NI ring and $Z_N(R)$ contains at least one non-zero nilpotent element, then $\Gamma_N(R)$ is connected, ${\rm diam}(\Gamma_N (R))\leq 2$, and $gr(\Gamma_N (R)) = 3$ or $\infty$.
    
    \item[{\rm (2)}] \cite[Theorem 2.7]{NikmehrAzadi2020} $\Gamma_N(R)$ is complete if and only if $R \cong \mathbb{Z}_2 \times \mathbb{Z}_2$.
    \end{enumerate}
\end{proposition}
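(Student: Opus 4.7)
The plan is to split the argument along the two parts of the proposition and rely crucially on the NI hypothesis (or, in part (2), on the completeness hypothesis itself) to reduce everything to combinatorics of a dominating vertex.

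For part (1), I would first record the structural consequence of $R$ being NI: the set $\mathrm{nil}(R)$ coincides with the upper nilradical and is therefore a two-sided ideal of $R$. This means that for any $r\in R$ and any $n\in\mathrm{nil}(R)$, both $rn$ and $nr$ lie in $\mathrm{nil}(R)$. Fixing a non-zero nilpotent $n\in Z_N(R)$, the element $n$ is a vertex of $\Gamma_N(R)$, and for every other vertex $v$ we get $nv\in\mathrm{nil}(R)$, so $n$ is adjacent to every other vertex. Given any two distinct vertices $u,v$, either $uv\in\mathrm{nil}(R)$ (so $d(u,v)=1$) or we use the path $u\,\text{---}\,n\,\text{---}\,v$, giving $d(u,v)\leq 2$. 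This proves connectedness and $\mathrm{diam}(\Gamma_N(R))\leq 2$.

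Still in part (1), for the girth I would assume $\Gamma_N(R)$ contains a cycle and show the shortest one has length $3$. Take a shortest cycle $v_1\,\text{---}\,v_2\,\text{---}\,\cdots\,\text{---}\,v_k\,\text{---}\,v_1$ with $k\geq 3$. If $n$ equals some $v_i$, then $n$ is adjacent to its two cycle-neighbors, but also to every other $v_j$ because $nv_j\in\mathrm{nil}(R)$; shortcutting across the cycle immediately produces a triangle unless $k=3$. If $n$ does not appear on the cycle, then $n$ is adjacent to each $v_i$ by the same ideal argument, and $n,v_1,v_2$ form a triangle, again forcing $k=3$. Hence if any cycle exists, $gr(\Gamma_N(R))=3$, and otherwise $gr(\Gamma_N(R))=\infty$.

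For part (2), the easy direction is to verify by hand that in $R=\mathbb{Z}_2\times\mathbb{Z}_2$ the only elements of $Z_N^*(R)$ are $(1,0)$ and $(0,1)$, and their product is $(0,0)\in\mathrm{nil}(R)$, so $\Gamma_N(R)\cong K_2$, which is complete. The forward direction is the main obstacle. My approach would be: assuming $\Gamma_N(R)$ is complete, first rule out the presence of a non-zero nilpotent $n$, by testing $n$ against $n+u$ for suitable units $u$ (using that $n+u$ is itself a unit, hence not in $Z_N^*(R)$, while the completeness constraint forces products $(n+u)x\in\mathrm{nil}(R)$ for every $x\in Z_N^*(R)$, yielding a contradiction once enough such $x$ are present). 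Once $R$ is shown to be reduced, $\mathrm{nil}(R)=\{0\}$ and $\Gamma_N(R)$ coincides with the classical zero-divisor graph $\Gamma(R)$; the completeness of $\Gamma(R)$ then reduces the problem, via the Anderson--Livingston/Redmond classification of rings with complete zero-divisor graph, to the two possibilities $R\cong\mathbb{Z}_2\times\mathbb{Z}_2$ or $(Z^*(R))^2=0$, and the latter is excluded by the reducedness just established. The delicate points in this last paragraph are the initial ruling-out of nilpotents (which can fail without extra care in the noncommutative setting) and a clean reduction to the commutative classification.
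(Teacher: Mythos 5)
Note first that the paper contains no proof of this proposition: it is quoted directly from Nikmehr and Azadi \cite{NikmehrAzadi2020} (their Remark 2.6 and Theorem 2.7), so your argument can only be measured on its own merits. Part (1) of your proposal is correct and is the standard argument: since $R$ is NI, $\mathrm{nil}(R)=\mathrm{nil}^{*}(R)$ is a two-sided ideal, so a fixed non-zero nilpotent vertex $n$ is adjacent to every other vertex, which gives connectedness, $\mathrm{diam}(\Gamma_N(R))\leq 2$, and a triangle through $n$ whenever any cycle exists. The easy direction of (2) for $\mathbb{Z}_2\times\mathbb{Z}_2$ is also fine.

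The genuine gap is in the forward direction of (2), at exactly the step you single out. The assertion that $n+u$ is a unit and \emph{hence not in} $Z_N^{*}(R)$ is false in the very situation you are considering: as soon as $R$ contains a non-zero nilpotent $n$, \emph{every} unit $v$ is a vertex, because $v\cdot(v^{-1}n)=n\in\mathrm{nil}(R)$; indeed the present paper uses $Z_N^{*}(R)=R^{*}$ in the proof of Theorem \ref{Corollary2.14Nikmehr}. The step is also internally inconsistent: if $n+u$ were not a vertex, completeness would impose no condition at all on products $(n+u)x$, since adjacency only constrains pairs of vertices; and for a general unit $u$ even the claim that $n+u$ is a unit needs $u^{-1}n$ nilpotent, which you cannot guarantee because part (2) as quoted carries no NI hypothesis. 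The repair is the opposite observation, with $u=1$: both $1$ and $1+n$ are vertices (as $1\cdot n=n\in\mathrm{nil}(R)$ and $(1+n)n=n+n^{2}$ is nilpotent, $n$ commuting with $1+n$), they are distinct, so completeness forces $1+n=1\cdot(1+n)\in\mathrm{nil}(R)$, contradicting that $1+n$ is a unit with inverse $1-n+n^{2}-\cdots$; hence $R$ is reduced. Finally, your last reduction needs care on two points: the Anderson--Livingston classification of complete zero-divisor graphs is a commutative statement, so you must either invoke its noncommutative analogue or argue directly (for reduced $R$ with adjacent vertices $a\neq b$ one shows $a^{2}=a$, $b^{2}=b$, $ab=ba=0$ and $a+b=1$; idempotents in a reduced ring are central, so $R\cong aR\times bR\cong\mathbb{Z}_2\times\mathbb{Z}_2$); and the equivalence tacitly assumes $\Gamma_N(R)$ has at least two vertices, since otherwise any domain is a vacuously complete counterexample---a caveat inherited from the source rather than introduced by you.
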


As is well known, if $R$ is reduced then every minimal prime ideal of $R$ is completely prime \cite[Lemma 1]{Bell1968}, and each minimal prime ideal is a union of annihilators \cite[Lemma 1.5]{Krempa1996}. Furthermore, if $P$ is a minimal prime ideal of a reduced and $(\sigma, \delta)$-compatible ring $R$, then $P[x;\sigma,\delta]$ is a minimal prime ideal of $R[x;\sigma,\delta]$ \cite[Lemma 2.5]{WangChen2018}. The second author has studied the minimal prime ideals of SPBW extensions \cite{LouzariReyes2020, NinoReyes2020}.

Theorem \ref{PBWTheorem2.9} characterizes the diameter of the nilpotent graph of SPBW extensions over $(\Sigma, \Delta)$-compatible 2-primal rings, showing that when $R$ has exactly two minimal prime ideals, then ${\rm diam}(\Gamma_N(\sigma(R)\langle x_1,\dots,x_n\rangle)) = 2$. We denote by $|X|$ the cardinality of any set $X$. The following theorem generalizes \cite[Theorem  2.9]{NikmehrAzadi2020}.

\begin{theorem}\label{PBWTheorem2.9}
If $R$ is $(\Sigma, \Delta)$-compatible, 2-primal and has exactly two minimal prime ideals, then 
\[
{\rm diam}(\Gamma_N(\sigma(R)\langle x_1,\dots,x_n\rangle))= 2.
\]
\end{theorem}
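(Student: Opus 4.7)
Set $A := \sigma(R)\langle x_1, \dotsc, x_n \rangle$. The plan is to prove the two inequalities ${\rm diam}(\Gamma_N(A)) \le 2$ and ${\rm diam}(\Gamma_N(A)) \ge 2$ separately, using Corollary~\ref{Corollarynilpotent} to transport nilpotency between $R$ and $A$. Recall that 2-primality gives $\mathrm{nil}(R) = P(R) = P_1 \cap P_2$, with each $P_i$ completely prime.

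For the lower bound, Proposition~\ref{Remark2.6Theorem2.7}(2) says that $\Gamma_N(A)$ is complete iff $A \cong \mathbb{Z}_2 \times \mathbb{Z}_2$. Since ${\rm Mon}(A)$ is a left $R$-basis containing the infinite set $\{1, x_1, x_1^2, \dotsc\}$, the ring $A$ is infinite, and hence cannot be isomorphic to $\mathbb{Z}_2 \times \mathbb{Z}_2$; so $\Gamma_N(A)$ is not complete. To guarantee at least two vertices, pick $a \in P_1 \setminus P_2$ and $b \in P_2 \setminus P_1$: both are nonzero and distinct (else $a \in P_1 \cap P_2 \subseteq P_2$), and $ab \in P_1 \cap P_2 = \mathrm{nil}(R) \subseteq \mathrm{nil}(A)$, so $a, b \in Z_N^*(A)$. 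Therefore ${\rm diam}(\Gamma_N(A)) > 1$.

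For the upper bound, Corollary~\ref{Corollarynilpotent} yields $\mathrm{nil}(A) = \mathrm{nil}(R)\langle x_1, \dotsc, x_n \rangle$ as an ideal of $A$, so $A$ is NI. If $R$ is not reduced, any nonzero $r \in \mathrm{nil}(R)$ is a nonzero nilpotent of $A$, and Proposition~\ref{Remark2.6Theorem2.7}(1) applied to $A$ immediately delivers connectedness together with ${\rm diam}(\Gamma_N(A)) \le 2$. If instead $R$ is reduced, then $A$ is reduced, $\mathrm{nil}(A) = 0$, and $\Gamma_N(A) = \Gamma(A)$; here the plan is to show first (via an SPBW analog of \cite[Lemma 2.5]{WangChen2018}, available under $(\Sigma,\Delta)$-compatibility) that the only minimal primes of $A$ are the extensions $\widetilde{P}_i := P_i\langle x_1, \dotsc, x_n\rangle$, so that $\widetilde{P}_1 \cdot \widetilde{P}_2 \subseteq \widetilde{P}_1 \cap \widetilde{P}_2 = 0$ and $Z^*(A) \subseteq \widetilde{P}_1 \cup \widetilde{P}_2$. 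Then for any $f, g \in Z^*(A)$, either they lie in different $\widetilde{P}_i$'s (so $fg = 0$ and they are adjacent), or both in the same $\widetilde{P}_i$, in which case any nonzero $h \in \widetilde{P}_{3-i}$ gives $fh = hg = 0$ and thus $d(f, g) \le 2$.

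The main obstacle I expect is the reduced sub-case of the upper bound: Proposition~\ref{Remark2.6Theorem2.7}(1) does not apply, so I need (i) the lifting of the minimal prime structure from $R$ to the SPBW extension $A$, an SPBW generalization of \cite[Lemma 2.5]{WangChen2018} hinging on $(\Sigma, \Delta)$-compatibility; and (ii) a noncommutative version of the bridging argument for reduced rings with exactly two minimal primes. The non-reduced case and the lower bound are by contrast direct applications of Corollary~\ref{Corollarynilpotent} and Proposition~\ref{Remark2.6Theorem2.7}.
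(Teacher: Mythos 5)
Your argument is correct, and its skeleton matches the paper's: split on whether $R$ is reduced, handle the non-reduced case via Corollary~\ref{Corollarynilpotent} together with Proposition~\ref{Remark2.6Theorem2.7}, and in the reduced case pass to the two lifted primes of $A$. Where you genuinely diverge is the reduced case. The ``SPBW analog of \cite[Lemma 2.5]{WangChen2018}'' you flag as the main obstacle is already in the literature and is exactly what the paper cites, namely \cite[Proposition 4.4]{LouzariReyes2020}: for a $(\Sigma,\Delta)$-compatible $2$-primal ring with minimal primes $P_1,P_2$, the ideals $P_iA$ are the only minimal primes of $A$; so nothing there needs to be reproved. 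Given that, the paper obtains the upper bound by quoting \cite[Theorem 3.9(3)]{AbdiTalebi2024} for ${\rm diam}(\Gamma(A))\le 2$ and gets the lower bound from a reversibility argument showing two distinct elements of $PA$ cannot annihilate each other, whereas you prove the upper bound directly and elementarily: $\widetilde P_1\widetilde P_2\subseteq \widetilde P_1\cap\widetilde P_2=0$, $Z^*(A)\subseteq \widetilde P_1\cup\widetilde P_2$ (minimal primes of the reduced ring $A$ being completely prime), and a bridging vertex $h\in\widetilde P_{3-i}$ when $f,g$ lie in the same prime (note $h\neq f,g$ since $\widetilde P_1\cap\widetilde P_2=0$, and $\widetilde P_{3-i}\neq 0$ because neither minimal prime of $R$ can be zero when there are exactly two). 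Your lower bound, via non-completeness of $\Gamma_N(A)$ from Proposition~\ref{Remark2.6Theorem2.7}(2) and the infiniteness of $A$, is applied uniformly in both cases and is arguably cleaner than the paper's reduced-case argument. The trade-off: the paper's proof is shorter by citation, while yours is self-contained modulo the single minimal-prime lifting result.
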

\begin{proof}
Let $A=\sigma(R)\langle x_1,\dots,x_n\rangle$ be an SPBW extension of $R$. If $R$ is a non-reduced ring then 
$\rm{diam}(\Gamma_N(A)) \leq 2$ by Corollary \ref{Corollarynilpotent} 
and Proposition \ref{Remark2.6Theorem2.7}(1). In addition, since $A \ncong \mathbb{Z}_2 \times \mathbb{Z}_2$, we obtain that $\Gamma_N(A)$ is not complete by Proposition \ref{Remark2.6Theorem2.7}(2), whence $\rm{diam}(\Gamma_N(A)) \geq 2$ and therefore $\rm{diam}(\Gamma_N(A)) = 2$.

If $R$ is reduced then $A$ is reduced by \cite[Theorem 3.9]{ReyesSuarez2018UMA}, which implies that $Z_N(A)=Z(A)$. Additionally, if $P, P'$ are the only two minimal prime ideals of $R$, then $PA$ and $P'A$ are the only two minimal prime ideals of $A$ by \cite[Proposition 4.4]{LouzariReyes2020}, and thus $Z(A)=PA \cup P'A$. Therefore, $Z(A)$ is not an ideal of $A$, and either $|PA| \geq 3$ or $|P'A| \geq 3$.

Let $f, g \in PA$ such that $f\neq g$ and $fg = 0$. If $A$ is reduced then $A$ is reversible, and since $fg=0$ it follows that $fAg = 0$. Thus, $f \in P'A$ or $g \in P'A$, which is a contradiction because $PA \neq P'A$. In this way, $d(f,g) \geq 2$ which implies that $\rm{diam}(\Gamma(A)) \geq 2$. If $A$ is reduced then ${\rm diam}(\Gamma_N(A)) = {\rm diam}(\Gamma(A))$, and $\rm{diam}(\Gamma(A)) \leq 2$ by \cite[Theorem 3.9 (3)]{AbdiTalebi2024}, and therefore we conclude that $\rm{diam}(\Gamma_N(A)) = 2$.
\end{proof} 

\begin{corollary}[{\cite[Theorem  2.9]{NikmehrAzadi2020}}]
If $R$ is symmetric, $\sigma$-compatible and has exactly two minimal primes,
then ${\rm diam}(\Gamma_N(R[x;\sigma])) = 2$.
\end{corollary}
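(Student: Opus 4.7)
The plan is to derive this corollary as a direct specialization of Theorem \ref{PBWTheorem2.9} to the one-variable, derivation-free case. The first step is to recognize that a skew polynomial ring $R[x;\sigma]$ is precisely the quasi-commutative SPBW extension $\sigma(R)\langle x\rangle$ of endomorphism type, with $n=1$, $\Sigma = \{\sigma\}$, and $\Delta = \{0\}$ in the notation of Definition \ref{quasicommutative}. Once this identification is in place, the task reduces to verifying the three hypotheses of Theorem \ref{PBWTheorem2.9} --- $(\Sigma,\Delta)$-compatibility, the 2-primal property, and the existence of exactly two minimal primes --- from the assumptions ``$R$ is symmetric, $\sigma$-compatible, and has exactly two minimal primes''.

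The $(\Sigma,\Delta)$-compatibility check in the sense of Definition \ref{Definition3.52008} is essentially immediate: with $\Sigma = \{\sigma\}$, the condition $a\sigma^{\alpha}(b)=0 \Leftrightarrow ab=0$ for every $\alpha \in \mathbb{N}$ follows from the one-step $\sigma$-compatibility hypothesis by induction on $\alpha$, and $\Delta$-compatibility with $\Delta = \{0\}$ holds vacuously because $\delta^{\beta}=0$ whenever $|\beta|\geq 1$. The 2-primal property, in turn, is a consequence of symmetry: a symmetric ring is reversible (applying the symmetric identity to the triple $(1,a,b)$ when $ab=0$ yields $ba=0$) and in fact semicommutative (from $ab=0$ one has $abr=0$, hence $arb=0$ by symmetry, so $aRb=0$), and semicommutative rings are 2-primal. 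The third hypothesis, exactly two minimal primes, is imposed directly.

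With these three verifications in hand, Theorem \ref{PBWTheorem2.9} applies directly to $A = \sigma(R)\langle x\rangle \cong R[x;\sigma]$ and yields ${\rm diam}(\Gamma_N(R[x;\sigma]))=2$. The only moderately nontrivial point is recording the chain symmetric $\Rightarrow$ reversible $\Rightarrow$ semicommutative $\Rightarrow$ 2-primal, but this is a well-known sequence of implications already alluded to in the introduction of the paper, so no substantive calculation is needed and I expect the proof to be essentially a one- or two-line reduction to Theorem \ref{PBWTheorem2.9}.
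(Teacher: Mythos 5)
Your reduction is correct and is essentially the route the paper intends: the corollary is stated without a separate proof precisely because it is the specialization of Theorem~\ref{PBWTheorem2.9} to $A=\sigma(R)\langle x\rangle\cong R[x;\sigma]$ (with $\Sigma=\{\sigma\}$, $\Delta=\{0\}$), where symmetric $\Rightarrow$ $2$-primal and one-step $\sigma$-compatibility $\Rightarrow$ $(\Sigma,\Delta)$-compatibility supply the hypotheses. The only detail worth recording is that $\sigma$-compatibility forces $\sigma$ to be injective (if $\sigma(b)=0$ then $1\cdot\sigma(b)=0$, hence $b=0$), so $R[x;\sigma]$ really is an SPBW extension in the sense of Definition~\ref{gpbwextension}; with that observation your argument matches the paper's.
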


Under compatibility conditions, Theorem \ref{PBWTheorem2.10} characterizes the diameter of nilpotent graphs of SPBW extensions over 2-primal rings, showing that $2 \le \text{diam}(\Gamma_N(\sigma(R)\langle x_1,...,x_n\rangle) \le 3$. The proof distinguishes between non-reduced rings where diameter two occurs, and reduced rings where techniques from zero-divisor graph theory yield the upper bound three. Theorem \ref{PBWTheorem2.10} is the corresponding generalization of \cite[Theorem  2.10]{NikmehrAzadi2020}.

\begin{theorem}\label{PBWTheorem2.10} If $R$ is $(\Sigma, \Delta)$-compatible and 2-primal, then \[2 \leq {\rm diam}(\Gamma_N (\sigma(R)\langle x_1,\dots,x_n\rangle)) \leq 3.\]
\end{theorem}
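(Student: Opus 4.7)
The plan is to imitate the case split already used in the proof of Theorem \ref{PBWTheorem2.9}, but without the hypothesis on the number of minimal prime ideals. Write $A = \sigma(R)\langle x_1,\dotsc,x_n\rangle$ and split according to whether $R$ is reduced. In both cases, the lower bound will come from ruling out that $\Gamma_N(A)$ is complete via Proposition \ref{Remark2.6Theorem2.7}(2), and the upper bound will come from either Proposition \ref{Remark2.6Theorem2.7}(1) (non-reduced case) or Redmond's bound in Proposition \ref{RedmondTheorem3.2} (reduced case).

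In the non-reduced case, $\mathrm{nil}(R)\neq 0$, and because $R$ is 2-primal it is NI. Corollary \ref{Corollarynilpotent} then gives that $\mathrm{nil}(A)=\mathrm{nil}(R)\langle x_1,\dotsc,x_n\rangle$ is a non-zero ideal of $A$, so $A$ is itself NI and $Z_N(A)$ contains a non-zero nilpotent element. Proposition \ref{Remark2.6Theorem2.7}(1) immediately yields ${\rm diam}(\Gamma_N(A))\leq 2$. Since $A$ is a free left $R$-module of infinite rank, $A\ncong \mathbb{Z}_2\times\mathbb{Z}_2$, and Proposition \ref{Remark2.6Theorem2.7}(2) rules out completeness of $\Gamma_N(A)$. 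Hence ${\rm diam}(\Gamma_N(A))\geq 2$, and equality is forced.

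In the reduced case, \cite[Theorem~3.9]{ReyesSuarez2018UMA} gives that $A$ is also reduced, so $\mathrm{nil}(A)=0$ and $\Gamma_N(A)=\Gamma(A)$. Redmond's bound (Proposition \ref{RedmondTheorem3.2}) then yields ${\rm diam}(\Gamma(A))\leq 3$, which furnishes the upper bound. For the lower bound we again invoke Proposition \ref{Remark2.6Theorem2.7}(2): $A\ncong\mathbb{Z}_2\times\mathbb{Z}_2$, so $\Gamma_N(A)$ is not complete and hence ${\rm diam}(\Gamma_N(A))\geq 2$ (under the tacit assumption that the graph has vertices at all, i.e., $A$ is not a domain).

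There is no single difficult step here: the genuine work has been deposited in Corollary \ref{Corollarynilpotent}, in \cite[Theorem~3.9]{ReyesSuarez2018UMA} (which transfers reducedness from $R$ to $A$ under $(\Sigma,\Delta)$-compatibility), and in Propositions \ref{RedmondTheorem3.2} and \ref{Remark2.6Theorem2.7}. The only mildly delicate point is the very first one, namely justifying that $A$ inherits the NI property from $R$ in the non-reduced branch, since this is what allows one to invoke Proposition \ref{Remark2.6Theorem2.7}(1) for $A$; this is exactly what compatibility was designed to ensure through Proposition \ref{ReyesSuarez2019-2Theorem4.6} and its Corollary.
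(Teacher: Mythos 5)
Your proof is correct, and for the upper bound it is exactly the paper's argument: the case split on whether $R$ is reduced, with Corollary \ref{Corollarynilpotent} plus Proposition \ref{Remark2.6Theorem2.7}(1) giving ${\rm diam}(\Gamma_N(A))\leq 2$ in the non-reduced case, and reducedness of $A$ (via \cite[Theorem 3.9]{ReyesSuarez2018UMA}) plus Redmond's bound, Proposition \ref{RedmondTheorem3.2}, giving ${\rm diam}(\Gamma_N(A))\leq 3$ in the reduced case. Where you genuinely diverge is the lower bound. The paper asserts ${\rm diam}(\Gamma_N(R))\leq {\rm diam}(\Gamma_N(\sigma(R)\langle x_1,\dots,x_n\rangle))$, applies Proposition \ref{Remark2.6Theorem2.7}(2) to $R$ when $R\ncong\mathbb{Z}_2\times\mathbb{Z}_2$, and handles the exceptional case $R\cong\mathbb{Z}_2\times\mathbb{Z}_2$ by invoking Theorem \ref{PBWTheorem2.9}; you instead apply Proposition \ref{Remark2.6Theorem2.7}(2) directly to $A$, noting that $A$ is infinite (it is free on the infinite set ${\rm Mon}(A)$), hence $A\ncong\mathbb{Z}_2\times\mathbb{Z}_2$ and $\Gamma_N(A)$ is not complete. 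This buys two things: no dependence on Theorem \ref{PBWTheorem2.9} and no case distinction on $R$, and you sidestep the paper's monotonicity claim, which as stated is not automatic (passing to a supergraph can only shorten distances, so the inequality between diameters needs an argument); what the paper's route really requires is only that a non-adjacent pair in $\Gamma_N(R)$ stays non-adjacent in $\Gamma_N(A)$, which does follow from Corollary \ref{Corollarynilpotent}, whereas your direct argument on $A$ avoids the issue altogether. The one caveat, which you flag yourself, is the degenerate situation where $\Gamma_N(A)$ has fewer than two vertices (e.g. $A$ reduced with no nonzero zero-divisors), where both the statement and Proposition \ref{Remark2.6Theorem2.7}(2) must be read with a tacit nonemptiness convention; the paper's own proof carries exactly the same tacit assumption, so this is a shared feature of the theorem's formulation rather than a defect of your argument.
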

\begin{proof}
If $A=\sigma(R)\langle x_1,\dots,x_n\rangle$ is an SPBW extension of $R$, then it can be seen that ${\rm diam}(\Gamma_N (R)) \leq {\rm diam}(\Gamma_N (A))$. Thus, if $R \ncong \mathbb{Z}_2 \times \mathbb{Z}_2$ then $2 \leq {\rm diam}(\Gamma_N (R))$ by Proposition \ref{Remark2.6Theorem2.7}(2), which yields that $2 \leq {\rm diam}(\Gamma_N (A))$.  

Now, if $R \cong \mathbb{Z}_2 \times \mathbb{Z}_2$, then $R$ is reduced with exactly two minimal prime ideals, whence ${\rm diam}(\Gamma_N(A)) = 2$ by Theorem \ref{PBWTheorem2.9}.  

Next, we show that ${\rm diam}(\Gamma_N(A)) \leq 3$. To prove this, we consider the following two cases:  
\begin{enumerate}[label=\Roman*.]
    \item [\rm (1)] If $R$ is non-reduced then ${\rm diam}(\Gamma_N (A)) \leq 2$ by Corollary \ref{Corollarynilpotent} and Proposition \ref{Remark2.6Theorem2.7}(1). In addition, ${\rm diam}(\Gamma_N (A)) \geq 2$ by Proposition \ref{Remark2.6Theorem2.7}(2), whence ${\rm diam}(\Gamma_N (A)) = 2 \leq 3$. 
    
    \item [\rm (2)] If $R$ is reduced, so is $A$ by \cite[Theorem 3.9]{ReyesSuarez2018UMA} and then ${\rm diam}(\Gamma_N(A))={\rm diam}(\Gamma(A))$.  Proposition \ref{RedmondTheorem3.2} guarantees that ${\rm diam}(\Gamma(A)) \leq 3$.
\end{enumerate}
\end{proof}

\begin{corollary}[{\cite[Theorem  2.10]{NikmehrAzadi2020}}] If $R$ is symmetric and $\sigma$-compatible then 
\[2 \leq {\rm diam}(\Gamma_N (R[x;\sigma])) \leq 3.\]
\end{corollary}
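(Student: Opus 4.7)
The plan is to deduce this statement as a direct specialization of Theorem \ref{PBWTheorem2.10} in the case $n=1$ with $\delta_1 = 0$. First, I would identify $R[x;\sigma]$ with the SPBW extension $\sigma(R)\langle x_1 \rangle$ of endomorphism type in the sense of Definition \ref{quasicommutative}(iii), taking $\sigma_1 := \sigma$ and $\delta_1 := 0$. Under this identification, the sets $\Sigma = \{\sigma\}$ and $\Delta = \{0\}$ play the roles required by Definition \ref{Definition3.52008}, and the nilpotent graph $\Gamma_N(R[x;\sigma])$ coincides with the graph $\Gamma_N(\sigma(R)\langle x_1 \rangle)$ to which Theorem \ref{PBWTheorem2.10} applies.

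Second, I would verify that the two hypotheses of Theorem \ref{PBWTheorem2.10} are implied by the hypotheses of the corollary. The $(\Sigma,\Delta)$-compatibility condition reduces to the pair: $\Sigma$-compatibility, which is exactly $\sigma$-compatibility restricted to the single endomorphism $\sigma$ and all exponents $\alpha \in \mathbb{N}$ (an easy induction on $\alpha$ starting from the hypothesis $ab = 0 \Leftrightarrow a\sigma(b) = 0$), and $\Delta$-compatibility, which is trivially satisfied since $\delta^\beta = 0$ for every $\beta \in \mathbb{N}$ (so $ab = 0$ vacuously implies $a\delta^\beta(b) = 0$). For the 2-primal condition, I would invoke the standard implications recalled in the introduction: symmetric rings are reversible, and reversible rings are 2-primal (which follows since in a reversible ring every nilpotent element lies in every prime ideal).

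Third, with both hypotheses of Theorem \ref{PBWTheorem2.10} verified, the conclusion
\[
2 \leq {\rm diam}(\Gamma_N(\sigma(R)\langle x_1 \rangle)) \leq 3
\]
applies verbatim, which under the identification from the first step gives the desired bound for $\Gamma_N(R[x;\sigma])$.

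I do not expect any serious obstacle: the entire content of the corollary is packaged into Theorem \ref{PBWTheorem2.10}, and the only point of substance is confirming the passage ``symmetric $\Rightarrow$ 2-primal'', which is a well-known implication in the literature and is already tacitly used elsewhere in the paper. The remaining translations ($R[x;\sigma]$ as an SPBW extension of endomorphism type, and the reduction of $(\Sigma,\Delta)$-compatibility to $\sigma$-compatibility when $\delta = 0$) are purely formal.
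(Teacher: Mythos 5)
Your proposal is correct and is essentially the derivation the paper intends: the corollary is stated as the immediate specialization of Theorem \ref{PBWTheorem2.10} to the one-variable, endomorphism-type extension $\sigma(R)\langle x_1\rangle \cong R[x;\sigma]$, with symmetric $\Rightarrow$ reversible $\Rightarrow$ 2-primal and $\sigma$-compatibility upgrading to $(\Sigma,\Delta)$-compatibility (trivially on the $\Delta$ side) exactly as you argue. The only detail worth recording is that viewing $R[x;\sigma]$ as an SPBW extension requires $\sigma$ to be injective, which your hypotheses already guarantee since $\sigma$-compatibility applied to $1\cdot\sigma(b)=0$ forces $b=0$.
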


If $R$ is $2$-primal, then Theorem \ref{PBWTheorem2.11} establishes three fundamental properties of $\Gamma_N(R)$: connectivity, diameter bounds, and cycle length restrictions. The proof distinguishes between reduced rings (where $\Gamma_N(R) = \Gamma(R)$) separately from non-reduced cases (where nilpotent elements guarantee shorter cycles). The following theorem generalizes \cite[Theorem  2.11]{NikmehrAzadi2020}.

\begin{theorem}\label{PBWTheorem2.11}
If $R$ is $2$-primal then the following assertions hold:
\begin{itemize}
    \item[{\rm (1)}] $\Gamma_N (R)$ is connected.
    
    \item[{\rm (2)}] ${\rm diam}(\Gamma_N (R)) \leq 3$.
    
    \item[{\rm (3)}] If $\Gamma_N (R)$ contains a cycle then $gr(\Gamma_N (R)) \leq 4$.
    
    \item[{\rm (4)}] If $R$ is a non-reduced ring then $gr(\Gamma_N (R)) = 3$.
\end{itemize}
\end{theorem}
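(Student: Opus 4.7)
The plan is to dispose of all four parts by splitting into the cases $R$ reduced and $R$ non-reduced, using throughout that the hypothesis that $R$ is $2$-primal makes $\mathrm{nil}(R)$ a two-sided ideal of $R$ and hence $R$ an NI ring.

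First I would treat the reduced case, where $\mathrm{nil}(R)=0$ and therefore $\Gamma_N(R)$ coincides with Redmond's zero-divisor graph $\Gamma(R)$. Parts (1) and (2) are then immediate from Proposition \ref{RedmondTheorem3.2}. For (3) in this case I would apply the girth-four bound for the zero-divisor graph (the non-commutative counterpart of \cite[Theorem 2.4]{AndersonLivingston1999}), which is available because reduced rings are symmetric and in particular reversible. Part (4) is vacuous under this hypothesis.

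Next I would treat the non-reduced case: here $\mathrm{nil}(R)\neq 0$ and so $Z_N(R)$ contains a nonzero nilpotent. Proposition \ref{Remark2.6Theorem2.7}(1) then applies directly and delivers connectivity, $\mathrm{diam}(\Gamma_N(R))\leq 2\leq 3$ and $gr(\Gamma_N(R))\in\{3,\infty\}$, settling (1), (2) and (3) at once.

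It therefore only remains to establish (4), i.e.\ to exhibit an explicit $3$-cycle in $\Gamma_N(R)$ when $R$ is non-reduced. The key observation is that, because $\mathrm{nil}(R)$ is a two-sided ideal, any three distinct nonzero elements of $\mathrm{nil}(R)$ are automatically pairwise adjacent in $\Gamma_N(R)$. Choosing $a\in\mathrm{nil}(R)\setminus\{0\}$ of minimal nilpotency index $k\geq 2$, the typical case $k\geq 3$ is handled by the triple $a$, $a^{2}$, $a+a^{2}$: these elements lie in $\mathrm{nil}(R)$, are pairwise distinct because $a^{2}\neq 0$, and are all nonzero (for instance, an equality $a^{2}=-a$ would force the sequence $a^{n}$ to cycle through $a,a^{2},a,a^{2},\dotsc$, contradicting $a^{k}=0$). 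When $k=2$, so $a^{2}=0$, I would instead take $a$, $ra$, $a+ra$ for $r\in R$ chosen so that $ra\notin\{0,a,-a\}$; all three vertices are then nonzero, pairwise distinct, and lie in $\mathrm{nil}(R)$, yielding the required triangle. The main obstacle is the degenerate situation in which no such $r$ exists, which forces $Ra\subseteq\{0,a,-a\}$ and thereby severely constrains the additive structure of $R$; in that residual case I would look for the third vertex inside the right ideal $aR$ or the two-sided ideal $RaR$, or exploit the smallness of $R/\mathrm{ann}^{\ell}_{R}(a)$ to locate it.
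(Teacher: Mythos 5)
Your handling of parts (1)--(3) is correct and essentially the paper's argument: split into the reduced case, where $\Gamma_N(R)=\Gamma(R)$ and Proposition \ref{RedmondTheorem3.2} (together with the girth bound for zero-divisor graphs) applies, and the non-reduced case, where Proposition \ref{Remark2.6Theorem2.7}(1) gives connectivity, ${\rm diam}(\Gamma_N(R))\leq 2$ and $gr(\Gamma_N(R))\in\{3,\infty\}$. (You are in fact more careful than the paper here, which attributes the bound $gr\leq 4$ to Proposition \ref{RedmondTheorem3.2}, whose statement only concerns connectivity and diameter.)

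The gap is in part (4), and the ``degenerate situation'' you postpone is not a technicality: it is precisely the case where the unconditional statement fails, so no choice of a third vertex in $aR$, $RaR$, or anywhere else can complete your argument. Take $R=\mathbb{Z}_4$, which is commutative (hence $2$-primal) and non-reduced with ${\rm nil}(R)=\{0,2\}$: the vertex set is $Z_N^{*}(R)=\{1,2,3\}$ and the only edges are $1-2$ and $2-3$, so $\Gamma_N(R)$ contains no cycle and $gr(\Gamma_N(R))=\infty$. This is consistent with the paper itself, whose Theorem \ref{Corollary2.14Nikmehr} explicitly treats non-reduced $2$-primal rings with $|{\rm nil}(R)|=2$ and $gr(\Gamma_N(R))=\infty$. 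Item (4) must therefore be read with the hypothesis of item (3) still in force, i.e.\ assuming $\Gamma_N(R)$ contains a cycle, as in the ``moreover'' clause of \cite[Theorem 2.11]{NikmehrAzadi2020}; note that the paper's own proof of (4) only cites Proposition \ref{Remark2.6Theorem2.7}(1), which yields $gr\in\{3,\infty\}$, so it does not establish the unconditional claim either. Under the correct reading the proof is immediate from the observation you already made: since $R$ is $2$-primal, ${\rm nil}(R)$ is an ideal, so a fixed non-zero nilpotent $a$ is adjacent to every other vertex of $\Gamma_N(R)$; a cycle of length $3$ is already a triangle, and a cycle of length at least $4$ contains an edge $u-v$ with $u,v\neq a$, so $a-u-v-a$ is a triangle and $gr(\Gamma_N(R))=3$. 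Your construction of three distinct non-zero nilpotents is unnecessary once this is noticed, and it cannot be carried out in general when $|{\rm nil}(R)|=2$.
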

\begin{proof}
Suppose that $R$ is a reduced ring. Then $\Gamma_N(R)= \Gamma(R) $, and it yields that $\Gamma_N (R)$
is a connected graph, ${\rm diam}(\Gamma_N (R)) \leq 3$ and $gr(\Gamma_N (R)) \leq 4$ by Proposition \ref{RedmondTheorem3.2}. 

On the other hand, if $R$ is a non-reduced ring then $\Gamma_N(R)$ is connected,
${\rm diam}(\Gamma_N(R)) \leq 2$ and $gr(\Gamma_N(R)) = 3$ by Proposition \ref{Remark2.6Theorem2.7}(1) and Corollary \ref{Corollarynilpotent}. 
\end{proof}

\begin{corollary}[{\cite[Theorem  2.11]{NikmehrAzadi2020}}]
If $R$ is symmetric then the following hold:
\begin{itemize}
    \item[{\rm (1)}] $\Gamma_N (R)$ is connected.
    
    \item[{\rm (2)}] ${\rm diam}(\Gamma_N (R)) \leq 3$.
    
    \item[{\rm (3)}] If $\Gamma_N (R)$ contains a cycle then $gr(\Gamma_N (R)) \leq 4$. Moreover, if $R$ is a non-reduced ring then $gr(\Gamma_N (R)) = 3$.
\end{itemize}
\end{corollary}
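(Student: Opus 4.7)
The plan is to split into two cases according to whether $R$ is reduced or non-reduced, since the 2-primal hypothesis behaves cleanly in both settings but the structure of $\mathrm{nil}(R)$ is quite different.

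First I would treat the case where $R$ is reduced. Here $\mathrm{nil}(R) = \{0\}$ forces $\Gamma_N(R)$ to coincide with the (undirected) zero-divisor graph $\Gamma(R)$, so Redmond's Proposition \ref{RedmondTheorem3.2} immediately yields connectivity together with $\mathrm{diam}(\Gamma_N(R)) \le 3$, giving (1) and (2) in this case. For (3) I would invoke the classical Anderson-Livingston-style bound that $gr(\Gamma(R)) \le 4$ whenever a cycle exists (extended to the noncommutative setting by Redmond), so the girth statement also follows for free.

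Next I would handle the non-reduced case. Since $R$ is 2-primal, $P(R) = \mathrm{nil}(R)$, and in particular $R$ is an NI ring with $\mathrm{nil}(R)$ an ideal. By hypothesis there is at least one non-zero nilpotent element, so the assumption of Proposition \ref{Remark2.6Theorem2.7}(1) is met; that proposition simultaneously gives connectivity of $\Gamma_N(R)$, the sharper bound $\mathrm{diam}(\Gamma_N(R)) \le 2 \le 3$, and $gr(\Gamma_N(R)) \in \{3, \infty\}$. This covers (1), (2), and the weaker form of (3) in the non-reduced case.

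The only remaining step is (4): narrowing $gr(\Gamma_N(R))$ from $\{3,\infty\}$ down to $3$. The idea is to use that $\mathrm{nil}(R)$ is an ideal containing a non-zero element $a$: then $ar \in \mathrm{nil}(R)$ for every $r \in R$, which makes $a$ adjacent to every other vertex of $\Gamma_N(R)$, and any two distinct non-zero nilpotents are automatically adjacent since their product lies in $\mathrm{nil}(R)$. Thus three distinct non-zero nilpotents yield an immediate triangle. I expect this to be the main obstacle of the argument, since one must actually produce three such elements; the cleanest way is to note that $\mathrm{nil}(R)$, being an ideal containing $a$, also contains $\mathbb{Z}$-multiples and $R$-multiples of $a$, and to argue that in any nontrivial situation (i.e.\ where $\Gamma_N(R)$ has enough vertices for the girth question to be meaningful) at least three distinct non-zero nilpotents are available. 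Invoking Corollary \ref{Corollarynilpotent} to confirm that the nilpotent structure is preserved and appealing to the standing convention that the nilpotent graph is taken to be nontrivial should close the gap.
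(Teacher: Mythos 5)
Your treatment of (1), (2) and the bound $gr(\Gamma_N(R))\leq 4$ follows the paper's route: symmetric rings are $2$-primal (you should say this explicitly, since it is the only reason the hypothesis ``symmetric'' enters), and then one splits into the reduced case, where $\Gamma_N(R)=\Gamma(R)$ and Proposition \ref{RedmondTheorem3.2} (together with the girth bound for zero-divisor graphs) applies, and the non-reduced case, where Proposition \ref{Remark2.6Theorem2.7}(1) gives connectivity, ${\rm diam}(\Gamma_N(R))\leq 2$ and $gr(\Gamma_N(R))\in\{3,\infty\}$. This is exactly the decomposition used to prove Theorem \ref{PBWTheorem2.11}, of which the statement is a corollary.

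The genuine gap is in your final step, where you try to upgrade $gr(\Gamma_N(R))\in\{3,\infty\}$ to $gr(\Gamma_N(R))=3$ by producing three distinct non-zero nilpotent elements. No such elements need exist, and no ``standing convention'' can supply them: for $R=\mathbb{Z}_4$ (or $\mathbb{Z}_2[x]/(x^2)$), which is commutative, hence symmetric, and non-reduced, one has ${\rm nil}(R)=\{0,2\}$, the vertex set is $\{1,2,3\}$, and the only edges are $1-2$ and $2-3$; the graph is a path, contains no cycle, and has girth $\infty$. So the unconditional claim you are aiming at is actually false, and your proposed fix (that ${\rm nil}(R)$, being an ideal, contains enough $\mathbb{Z}$- or $R$-multiples of $a$) collapses precisely in this case, since all such multiples lie in $\{0,a\}$. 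The ``moreover'' clause must be read under the standing hypothesis of item (3) that $\Gamma_N(R)$ contains a cycle; with that hypothesis the conclusion is immediate: Proposition \ref{Remark2.6Theorem2.7}(1) gives $gr(\Gamma_N(R))=3$ or $\infty$, and the existence of a cycle excludes $\infty$. (Your triangle construction is fine when $|{\rm nil}(R)|\geq 3$ --- it is essentially the argument used in Theorem \ref{Corollary2.14Nikmehr} --- but it cannot cover the case $|{\rm nil}(R)|=2$, which is exactly where the cycle hypothesis is needed.) Also note that Corollary \ref{Corollarynilpotent} concerns ${\rm nil}$ of the skew PBW extension, not of $R$, so it does nothing for you here; the fact you actually need is only that ${\rm nil}(R)$ is an ideal, which follows from $R$ being $2$-primal.
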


Lemma \ref{PBWLemma2.12} establishes that for SPBW extensions over reduced $(\Sigma, \Delta)$-compatible rings with non-trivial nilpotent zero-divisors, the nilpotent graph satisfies $gr(\Gamma_N (\sigma(R)\langle x_1,\dots,x_n\rangle)) \le 4$. The following lemma extends \cite[Lemma 2.12]{NikmehrAzadi2020}.

\begin{lemma}\label{PBWLemma2.12}
If $R$ is $(\Sigma, \Delta)$-compatible reduced and $Z_N^{*}(R) \neq \emptyset$ then \[gr(\Gamma_N (\sigma(R)\langle x_1,\dots,x_n\rangle)) \leq 4.\]
\end{lemma}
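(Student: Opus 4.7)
Since $R$ is $(\Sigma,\Delta)$-compatible and reduced, Theorem 3.9 of \cite{ReyesSuarez2018UMA} gives that $A:=\sigma(R)\langle x_1,\dots,x_n\rangle$ is reduced, hence $\mathrm{nil}(A)=0$ and $\Gamma_N(A)=\Gamma(A)$. My strategy is to exhibit an explicit $4$-cycle in $\Gamma_N(A)$ using a nonzero zero-divisor pair in $R$ together with the indeterminate $x_1$, and to verify the four required products vanish by combining reducedness (which forces reversibility) with the $(\Sigma,\Delta)$-compatibility hypothesis.

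The hypothesis $Z_N^{*}(R)\neq\emptyset$ together with $R$ reduced ensures the existence of nonzero $a,b\in R$ with $ab=0$. Since $R$ is reduced, it is reversible, so $ba=0$ also. Moreover $a\neq b$, for otherwise $a^2=0$ would force $a=0$. I propose the candidate cycle on vertices $a,\ b,\ ax_1,\ bx_1$. Distinctness follows immediately from the fact that $A$ is a free left $R$-module with basis $\mathrm{Mon}(A)$: elements of different total degree in $x_1$ cannot coincide, and $ax_1\neq bx_1$ since $a\neq b$.

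The edges are verified as follows. The pairs $\{a,b\}$ and $\{a,bx_1\}$ are immediate from $ab=0$, giving $a\cdot b=0$ and $a\cdot(bx_1)=(ab)x_1=0$. For $\{b,ax_1\}$, use $ba=0$ to get $b\cdot(ax_1)=(ba)x_1=0$. The key calculation is the edge $\{ax_1,bx_1\}$: apply the Ore-type relation $x_1 r=\sigma_1(r)x_1+\delta_1(r)$ twice to obtain
\[
(ax_1)(bx_1)=a\sigma_1(b)x_1^{2}+a\delta_1(b)x_1.
\]
Since $ab=0$ and $R$ is $(\Sigma,\Delta)$-compatible, Proposition \ref{colosss} yields $a\sigma_1(b)=0$ and $a\delta_1(b)=0$, whence $(ax_1)(bx_1)=0$. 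Thus $a-b-ax_1-bx_1-a$ is a cycle of length $4$ in $\Gamma_N(A)$, and therefore $gr(\Gamma_N(A))\leq 4$.

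The main subtlety is ensuring the four vertices are truly distinct and actually lie in $Z_N^{*}(A)$; distinctness is handled by the PBW basis property (different $x_1$-degrees), while membership in $Z_N^{*}(A)$ is automatic from the edges above. The only genuinely computational step is the product $(ax_1)(bx_1)$, where compatibility is indispensable: without $a\sigma_1(b)=a\delta_1(b)=0$ the cycle could fail, so the hypothesis $(\Sigma,\Delta)$-compatible is used precisely at this point. No use of $x_j$ for $j>1$ is needed; the same argument works with any single indeterminate.
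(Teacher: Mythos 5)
Your proof is correct and follows essentially the same route as the paper: starting from a nonzero pair $a,b$ with $ab=0$ (and $ba=0$ by reversibility of the reduced ring), you use Proposition \ref{colosss} to kill the $\sigma$- and $\delta$-twisted products and exhibit the $4$-cycle on $a,\ b,\ ax_1,\ bx_1$, which is exactly the paper's cycle $a - bx^{\alpha} - ax^{\beta} - b - a$ specialized to $\alpha=\beta=(1,0,\dots,0)$. Your version is if anything slightly more careful, since you verify distinctness of the vertices and compute $(ax_1)(bx_1)$ explicitly.
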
 
\begin{proof}
Assume that $A=\sigma(R)\langle x_1,\dots,x_n\rangle$ is an SPBW extension of $R$. If $R$ is reduced and $Z_N(R)^{*} \neq \emptyset$ then there exist non-zero elements $a, b \in Z_N (R)$ such that $ab = 0$, which implies that for all $\alpha, \beta \in \mathbb{N}^n$, we obtain that $b\sigma^{\alpha}(a) = a\sigma^{\beta}(b) = 0$ and $\sigma^{\theta}(a)\delta^{\beta}(b)= \delta^{\beta}(a)\sigma^{\theta}(b) = 0$ by Proposition \ref{colosss}. In this way, we may consider the cycle $a - bx^{\alpha} - ax^{\beta} - b - a$ of
length four in the nilpotent graph $\Gamma_N (A)$ whence $gr(\Gamma_N (A)) \leq 4$.
\end{proof}

Theorem \ref{PBWTheorem2.13} establishes that if $R$ is $2$-primal then $gr(\Gamma_N (R)) \geq gr(\Gamma_N (\sigma(R)\langle x_1,\dots,x_n\rangle))$, with equality when $\Gamma_N(R)$ contains cycles, and shows that ${gr}(\Gamma_N(R)) = 3$ occurs for non-reduced rings, while reduced rings satisfy ${gr}(\Gamma_N(R)) = 4$ when the $\Gamma_N(\sigma(R)\langle x_1,\dots,x_n\rangle)$ has a cycle of length four. 

The following theorem generalizes \cite[Theorem 2.13]{NikmehrAzadi2020}.

\begin{theorem}\label{PBWTheorem2.13}
If $R$ is $(\Sigma, \Delta)$-compatible and 2-primal, then 
\[
gr(\Gamma_N (R)) \geq gr(\Gamma_N (\sigma(R)\langle x_1,\dots,x_n\rangle)).
\]

In addition, if $\Gamma_N(R)$ contains a cycle, then we have that 
\[
gr(\Gamma_N (R)) = gr(\Gamma_N(\sigma(R)\langle x_1,\dots,x_n\rangle)).
\]
\end{theorem}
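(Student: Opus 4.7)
The plan is to prove the two assertions by embedding $R$ into $A := \sigma(R)\langle x_1,\dots,x_n\rangle$ and, for the reduced case of the equality, by a coefficient-annihilation argument. For the first inequality, I would observe that since $R$ is 2-primal it is NI, and Corollary \ref{Corollarynilpotent} yields ${\rm nil}(R) \subseteq {\rm nil}(A)$. Hence any cycle $a_1 - a_2 - \cdots - a_k - a_1$ of $\Gamma_N(R)$ persists as a cycle of the same length in $\Gamma_N(A)$: the $a_i$ remain distinct non-zero elements of $A$, and each edge condition $a_i a_{i+1} \in {\rm nil}(R) \subseteq {\rm nil}(A)$ is preserved. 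This immediately gives $gr(\Gamma_N(A)) \leq gr(\Gamma_N(R))$.

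For the equality, assume $\Gamma_N(R)$ contains a cycle, so that $gr(\Gamma_N(R)) \in \{3,4\}$ by Theorem \ref{PBWTheorem2.11}. If $R$ is non-reduced, Theorem \ref{PBWTheorem2.11}(4) yields $gr(\Gamma_N(R)) = 3$, and the previous inequality together with the trivial lower bound $gr \geq 3$ for simple graphs forces $gr(\Gamma_N(A)) = 3$. If $R$ is reduced, then $A$ is reduced by \cite[Theorem 3.9]{ReyesSuarez2018UMA}, so $\Gamma_N(R) = \Gamma(R)$ and $\Gamma_N(A) = \Gamma(A)$; the subcase $gr(\Gamma(R)) = 3$ is handled exactly as above, and for $gr(\Gamma(R)) = 4$ I would argue by contradiction. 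Assuming $\Gamma(A)$ carries a triangle $f, g, h$ with $fg = gh = fh = 0$, the coefficient-annihilation principle for reduced $(\Sigma, \Delta)$-compatible SPBW extensions yields that every coefficient product from any two of $f, g, h$ vanishes in $R$. Picking non-zero coefficients $a$ of $f$, $b$ of $g$, $c$ of $h$, we obtain $ab = bc = ac = 0$ in $R$; reducedness of $R$ forbids equality of any two of these (else the common element would satisfy $a^2 = 0$ and hence equal zero), producing a triangle in $\Gamma(R)$ and contradicting $gr(\Gamma(R)) = 4$.

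The main obstacle is the coefficient-annihilation statement used in the last step: if $R$ is reduced and $(\Sigma, \Delta)$-compatible and $fg = 0$ in $A$, then every coefficient product $a_i b_j$ vanishes in $R$. I expect this to follow by induction on the leading monomial under $\succeq$: the leading term of $fg$, computed via Proposition \ref{Reyes2015Proposition2.9}(2), forces $a_m \sigma^{\alpha_m}(b_p) = 0$, whence $a_m b_p = 0$ by $\Sigma$-compatibility; Proposition \ref{colosss} then propagates this vanishing through all the $\sigma^{\alpha}$- and $\delta^{\beta}$-shifts appearing in the lower-order summands, permitting a downward induction on the leading monomial. Once this lemma is in place, everything else reduces to straightforward cycle-lifting and elementary girth bookkeeping.
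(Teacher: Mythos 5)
Your proposal is correct and follows essentially the same route as the paper: the subgraph inclusion $\Gamma_N(R)\subseteq\Gamma_N(A)$ gives $gr(\Gamma_N(A))\le gr(\Gamma_N(R))$, the non-reduced case is settled by girth $3$, and in the reduced case a triangle in $\Gamma_N(A)$ is pushed down to a triangle in $\Gamma_N(R)$ via $(\Sigma,\Delta)$-compatibility and reducedness (which also forces the extracted coefficients to be distinct). The only cosmetic difference is that you invoke full coefficient annihilation (the $\Sigma$-rigid/Armendariz-type fact) where the paper only needs the vanishing of the products of leading coefficients, and you organize the reduced case as a contradiction on $gr(\Gamma_N(R))=4$ rather than casing on $gr(\Gamma_N(A))\in\{3,4\}$ via Lemma \ref{PBWLemma2.12}.
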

\begin{proof}
If $Z_N^{*}(R) = \emptyset$, then ${gr}(\Gamma_N (R)) = {gr}(\Gamma_N (A))=\infty$. Suppose that $Z_N^{*}(R) \neq \emptyset$ and let $A=\sigma(R)\langle x_1,\dots,x_n\rangle$ be an SPBW extension of $R$. It is not difficult to prove that if $\Gamma_N(R)$ is a subgraph of $\Gamma_N(A)$, then ${gr}(\Gamma_N(R)) \geq {gr}(\Gamma_N(A))$.  

If $R$ is $2$-primal but not reduced, then ${gr}(\Gamma_N(R)) = 3$ by Theorem \ref{PBWTheorem2.11}(4). Since ${gr}(\Gamma_N (R)) \geq {gr}(\Gamma_N (A))$, we have that ${gr}(\Gamma_N(A)) \le {gr}(\Gamma_N(R)) = 3$. In addition, girth is defined as the length of the shortest cycle, and a cycle of length 2 would require two distinct nilpotent elements $f, g$ such that $fg\in {\rm nil}(A)$, which would already form a cycle of length 3 in $\Gamma_N(A)$. Thus, ${gr}(\Gamma_N (A))$ cannot be less than 3, and hence ${gr}(\Gamma_N(A)) = {gr}(\Gamma_N(R)) = 3$

If $R$ is reduced, we have that ${gr}(\Gamma_N(A)) \leq 4$ by Lemma \ref{PBWLemma2.12}. Suppose that ${gr}(\Gamma_N(A)) = 4$. If the graph $\Gamma_N(R)$ contains a cycle, we obtain that ${gr}(\Gamma_N(R)) \leq 4$ by Theorem \ref{PBWTheorem2.11}(3), and since ${gr}(\Gamma_N(R)) \geq {gr}(\Gamma_N(A)) = 4$, we conclude that ${gr}(\Gamma_N(R)) = 4$.  

Assume that ${gr}(\Gamma_N(A)) = 3$ and let $f - g - h - f$ be a cycle of length three, where $f, g, h$ are non-zero elements of $A$ with leading coefficient $a_n, b_n$ and $c_n$, respectively. If $fg = 0$ then $a_n\sigma^{\alpha_n}(b_m) = 0$, and thus $a_nb_m = 0$ by $(\Sigma,\Delta)$-compatibility of $R$. Similarly, one can seen that $b_mc_l = 0$ and $c_la_n = 0$. If $R$ is reduced then $a_n$, $b_m$, and $c_l$ are distinct, and thus we may consider the cycle $a_n - b_m - c_l - a_n$, which has length three whence ${gr}(\Gamma_N(R)) = 3$.
\end{proof}

\begin{corollary}[{\cite[Theorem  2.13]{NikmehrAzadi2020}}]
If $R$ is symmetric and $\sigma$-compatible, then $gr(\Gamma_N (R)) \geq gr(\Gamma_N (R[x;\sigma]))$.
In addition, if $\Gamma_N (R)$ contains a cycle then $gr(\Gamma_N (R)) = gr(\Gamma_N(R[x;\sigma]))$.
\end{corollary}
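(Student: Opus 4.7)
The plan is to derive this corollary as a direct specialization of Theorem \ref{PBWTheorem2.13} to the one-variable, derivation-free case. The skew polynomial ring $R[x;\sigma]$ is precisely the SPBW extension $\sigma(R)\langle x\rangle$ of endomorphism type (Definition \ref{quasicommutative}(iii)): condition (iii) of Definition \ref{gpbwextension} reads $xr = \sigma(r)x$, condition (iv) is vacuous since $n=1$, and the compatibility data specializes to $\Sigma = \{\sigma\}$, $\Delta = \{0\}$. So the work reduces to checking the two hypotheses of the theorem: that $R$ is $(\Sigma,\Delta)$-compatible and that $R$ is 2-primal.

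For $(\Sigma,\Delta)$-compatibility in the sense of Definition \ref{Definition3.52008}, $\Delta$-compatibility is automatic because $\delta = 0$: for any $\beta \in \mathbb{N}$, the element $a\delta^{\beta}(b)$ is either $ab$ (when $\beta = 0$) or $0$ (when $\beta \geq 1$). $\Sigma$-compatibility requires $ab = 0 \iff a\sigma^k(b) = 0$ for every $k \in \mathbb{N}$, which I would obtain from the single-step $\sigma$-compatibility hypothesis by induction on $k$: applying the $k=1$ equivalence to the pair $(a,\sigma^{k-1}(b))$ yields $a\sigma^{k-1}(b) = 0 \iff a\sigma^k(b) = 0$, and the base case $k=0$ is tautological.

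For 2-primality I would invoke the standard chain symmetric $\Rightarrow$ reversible $\Rightarrow$ semicommutative $\Rightarrow$ 2-primal. Symmetric implies reversible by setting one argument to $1$ in the defining condition $abc = 0 \Rightarrow acb = 0$; reversible implies semicommutative because $ab = 0$ forces $ba = 0$, hence $bar = 0$ for any $r \in R$, and reversibility applied once more gives $arb = 0$; and semicommutative rings are 2-primal by a classical result (Shin, or Birkenmeier--Heatherly--Lee). Consequently $P(R) = \mathrm{nil}(R)$.

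With both hypotheses verified, Theorem \ref{PBWTheorem2.13} applied to $A = \sigma(R)\langle x\rangle$ delivers the inequality $gr(\Gamma_N(R)) \geq gr(\Gamma_N(R[x;\sigma]))$, together with equality when $\Gamma_N(R)$ contains a cycle. The only step in this plan that is not entirely mechanical is the promotion of single-step $\sigma$-compatibility to $\Sigma$-compatibility for all exponents in $\mathbb{N}$, but this is handled by the routine induction described above; no further obstacle is expected.
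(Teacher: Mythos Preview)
Your proposal is correct and follows the same approach as the paper: the corollary is stated immediately after Theorem~\ref{PBWTheorem2.13} with no separate proof, and is obtained exactly by specializing to $A=\sigma(R)\langle x\rangle=R[x;\sigma]$ after observing that symmetric $\Rightarrow$ 2-primal and that $\sigma$-compatibility (with $\delta=0$) yields $(\Sigma,\Delta)$-compatibility. Your verification of these reductions is accurate; the only point you leave implicit is that $\sigma$-compatibility forces $\sigma$ to be injective (take $a=1$), which is needed for $R[x;\sigma]$ to fit Definition~\ref{gpbwextension}(iii), but this is routine.
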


Theorem \ref{Corollary2.14Nikmehr} characterizes the girth of the nilpotent graph $\Gamma_N(\sigma(R)\langle x_1,\dots,x_n\rangle)$, where $R$ is $(\Sigma, \Delta)$-compatible 2-primal, $gr(\Gamma_N(R)) = \infty$ and $Z_N^*(R) \neq \emptyset$. This provides a complete classification for such extensions. The following theorem generalizes \cite[Corollary  2.14]{NikmehrAzadi2020}.

\begin{theorem}\label{Corollary2.14Nikmehr} If $R$ is a $(\Sigma, \Delta)$-compatible 2-primal ring, $gr(\Gamma_N (R)) = \infty$ and $Z_N^{*} (R) \neq \emptyset$,
then we have either $|{\rm nil}(R)| = 2$ and $gr(\Gamma_N (\sigma(R)\langle x_1,\dots,x_n\rangle)) = 3$ or $R$ is a reduced ring and $gr(\Gamma_N (\sigma(R)\langle x_1,\dots,x_n\rangle)) = 4$.
\end{theorem}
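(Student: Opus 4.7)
The plan is to split on whether $R$ is reduced, and in each case use the structural results already established to pin down the girth of $\Gamma_N(A)$, where $A := \sigma(R)\langle x_1,\dots,x_n\rangle$.

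\emph{Reduced case.} Since $R$ is reduced, $(\Sigma,\Delta)$-compatible, and $Z_N^{*}(R)\neq\emptyset$, Lemma \ref{PBWLemma2.12} gives $gr(\Gamma_N(A))\leq 4$. To rule out a $3$-cycle, suppose $f-g-h-f$ is such a cycle with leading coefficients $a_n,b_m,c_l$. Because $R$ reduced implies $A$ reduced (\cite[Theorem 3.9]{ReyesSuarez2018UMA}), the three edges collapse to $fg=gh=hf=0$; the leading-coefficient argument in the last paragraph of the proof of Theorem \ref{PBWTheorem2.13} then produces a $3$-cycle $a_n-b_m-c_l-a_n$ in $\Gamma_N(R)$, contradicting $gr(\Gamma_N(R))=\infty$. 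Hence $gr(\Gamma_N(A))\neq 3$, and the upper bound forces $gr(\Gamma_N(A))=4$.

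\emph{Non-reduced case.} Since $R$ is $2$-primal it is NI, so $\text{nil}(R)$ is a two-sided ideal. I first claim $|\text{nil}(R)|=2$. Fix a non-zero $a\in\text{nil}(R)$; since $\text{nil}(R)$ is an ideal, every non-zero $r\in R$ satisfies $ra\in\text{nil}(R)$ with witness $a\neq 0$, so $Z_N^{*}(R)=R\setminus\{0\}$. If $|\text{nil}(R)|\geq 3$, pick a second non-zero nilpotent $b\neq a$; the identity $1$ is not nilpotent and is therefore distinct from $a$ and $b$, and the three vertices $a,b,1\in Z_N^{*}(R)$ satisfy $ab,\ b\cdot 1=b,\ 1\cdot a=a\in\text{nil}(R)$, giving the triangle $a-b-1-a$ in $\Gamma_N(R)$ and contradicting $gr(\Gamma_N(R))=\infty$. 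Hence $\text{nil}(R)=\{0,a\}$; since $a^2\in\{0,a\}$ and a nonzero nilpotent idempotent is impossible, $a^2=0$.

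For the $3$-cycle in $\Gamma_N(A)$, fix any index $i$ and consider the three non-zero, pairwise distinct (in the standard-monomial basis) elements $a,\ ax_i,\ ax_i^{2}$; all three lie in $\text{nil}(A)\subseteq Z_N(A)$ by Corollary \ref{Corollarynilpotent}. Two of the needed products are immediate: $a\cdot ax_i=a^{2}x_i=0$ and $a\cdot ax_i^{2}=a^{2}x_i^{2}=0$. For the remaining edge, expand $ax_i\cdot a=a\sigma_i(a)x_i+a\delta_i(a)$; applying Proposition \ref{colosss}(1),(3) to $a^{2}=0$ yields $a\sigma_i(a)=0=a\delta_i(a)$, so $ax_i\cdot a=0$ and consequently $ax_i\cdot ax_i^{2}=(ax_i\cdot a)x_i^{2}=0\in\text{nil}(A)$. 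Thus $a-ax_i-ax_i^{2}-a$ is a triangle, giving $gr(\Gamma_N(A))=3$. The main obstacle is the implication $|\text{nil}(R)|\geq 3\Rightarrow$ triangle in $\Gamma_N(R)$; the identity $1$ serves cleanly as the third vertex precisely because NI forces all three required products back into $\text{nil}(R)$.
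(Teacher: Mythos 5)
Your proof is correct and takes essentially the same route as the paper: in the reduced case it uses Lemma \ref{PBWLemma2.12} together with the leading-coefficient argument of Theorem \ref{PBWTheorem2.13} to exclude a $3$-cycle, and in the non-reduced case it forces $|{\rm nil}(R)|=2$ via a triangle in $\Gamma_N(R)$ and then exhibits a triangle $a-ax_i-ax_i^{2}$ in $\Gamma_N(A)$, just as the paper does with $a-ax^{\alpha}-ax^{\beta}$. The only cosmetic difference is that you take $1$ as the third vertex of the triangle in $\Gamma_N(R)$, whereas the paper passes through $|U(R)|\geq 3$ and picks a unit $c\neq 1$; both work since ${\rm nil}(R)$ is an ideal.
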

\begin{proof}
Let $A=\sigma(R)\langle x_1,\dots,x_n\rangle$ be an SPBW extension of $R$. Assume that $R$ is $2$-primal but not reduced, and suppose that $|{\rm nil}(R)| \geq 3$ which implies that $|U(R)| \geq 3$. Let $a, b \in {\rm nil}(R)$ be two non-zero elements and $1 \neq c \in U(R)$. By Proposition \ref{Remark2.6Theorem2.7}(1), $a$ and $b$ are adjacent to all vertices contained $Z_N^{*} (R) = R^{*}$. In this way, $a - b - c - a$ forms a triangle and ${gr}(\Gamma_N (R)) = 3$, a contradiction. We conclude that $|{\rm nil}(R)| = 2$.  

Now, suppose $a \in {\rm nil}(R)$ with $a \neq 0$. Since $a^2 = 0$, then $a - ax^{\alpha} - ax^{\beta} - a$ forms a triangle for any non-zero element $a \in {\rm nil}(R)$ and for all $\alpha, \beta \in \mathbb{N}^n$, and hence ${gr}(\Gamma_N (A)) = 3$.  

Let us consider the other situation. If $R$ is reduced then ${gr}(\Gamma_N (A)) \leq 4$ by Theorem \ref{PBWTheorem2.13}. In addition, if ${gr}(\Gamma_N (A)) = 3$ then $f - g - h - f$ is a cycle of length three, where $f, g, h$ are non-zero elements of $A$ with leading coefficient $a_n, b_n$ and $c_n$, respectively. In Theorem \ref{PBWTheorem2.13}, we prove that the cycle $a_n - b_m - c_l - a_n$ is of length three, and then ${gr}(\Gamma_N(R)) = 3$ which is a contradiction. Thus, ${gr}(\Gamma_N (A)) = 4$.
\end{proof}

\begin{corollary}[{\cite[Corollary  2.14]{NikmehrAzadi2020}}] If $R$ is symmetric and $\sigma$-compatible, $gr(\Gamma_N (R)) = \infty$ and $Z_N^{*} (R) \neq \emptyset$,
then either $|{\rm nil}(R)| = 2$ and $gr(\Gamma_N (R[x;\sigma])) = 3$ or $R$ is reduced and $gr(\Gamma_N (R[x;\sigma])) = 4$.
\end{corollary}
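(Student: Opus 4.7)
The plan is to derive this corollary as a direct specialization of Theorem \ref{Corollary2.14Nikmehr} to the one-variable, derivation-free case. The skew polynomial ring $R[x;\sigma]$ is precisely the SPBW extension $\sigma(R)\langle x\rangle$ with $n=1$, $\sigma_1 = \sigma$, and $\delta_1 = 0$; in the paper's notation this means $\Sigma = \{\sigma\}$ and $\Delta = \{0\}$. The strategy is therefore to check that the hypotheses of the corollary force the hypotheses of the general theorem, and then to read off the conclusion verbatim.

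First I would verify the compatibility hypothesis. Classical $\sigma$-compatibility ($ab = 0 \iff a\sigma(b) = 0$) extends by an easy induction on $\alpha \in \mathbb{N}$ to the equivalence $ab = 0 \iff a\sigma^{\alpha}(b) = 0$, which is precisely $\Sigma$-compatibility in the sense of Definition \ref{Definition3.52008}. The $\Delta$-compatibility condition is vacuous here, because $\delta_1 = 0$ forces $a\delta^{\beta}(b) = 0$ for every $\beta \in \mathbb{N}$ whenever $ab = 0$. Hence $R$ is $(\Sigma, \Delta)$-compatible.

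Second, I would show that symmetric rings are $2$-primal. The standard two-step argument is: symmetric implies semicommutative (for if $ab = 0$ then $abr = 0$ for any $r$, and the symmetry hypothesis gives $arb = 0$), and every semicommutative ring is $2$-primal because in such rings the set of nilpotent elements is an ideal that coincides with the prime radical. With these two reductions in place, the hypotheses of Theorem \ref{Corollary2.14Nikmehr} --- $(\Sigma, \Delta)$-compatibility, $2$-primality, $gr(\Gamma_N(R)) = \infty$, and $Z_N^{*}(R) \neq \emptyset$ --- are all satisfied, and its conclusion yields exactly the stated dichotomy for $\sigma(R)\langle x\rangle = R[x;\sigma]$: either $|\mathrm{nil}(R)| = 2$ with $gr(\Gamma_N(R[x;\sigma])) = 3$, or $R$ is reduced with $gr(\Gamma_N(R[x;\sigma])) = 4$.

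The only step requiring any care is the passage symmetric $\Rightarrow$ $2$-primal, which rests on well-known ring-theoretic facts rather than anything specific to SPBW extensions; I do not anticipate a genuine obstacle there. Everything else is a bookkeeping specialization ($n=1$, $\delta_1 = 0$) followed by invocation of the general theorem.
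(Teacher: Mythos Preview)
Your proposal is correct and matches the paper's approach: the corollary is stated without proof immediately after Theorem \ref{Corollary2.14Nikmehr}, being an evident specialization to the case $n=1$, $\delta_1=0$. You have correctly supplied the two bridging verifications the paper leaves implicit, namely that $\sigma$-compatibility with trivial derivation yields $(\Sigma,\Delta)$-compatibility, and that symmetric $\Rightarrow$ semicommutative $\Rightarrow$ $2$-primal.
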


\section{Examples}\label{Examplespaper}
The importance of our results is appreciated when we extend their application to algebraic structures that are more general than those considered by Nikmehr and Khojasteh \cite{Nikmehretal2013}, that is, some noncommutative rings which cannot be expressed as skew polynomial rings $R[x;\sigma]$. In this section, we consider several families of rings that have been studied in the literature which are subfamilies of skew PBW extensions. Of course, the list of examples is not exhaustive.

\begin{definition}[{\cite{BellSmith1990}; \cite[Definition C4.3]{Rosenberg1995}}]\label{monito}
A $3$-{\em dimensional algebra} $\mathcal{A}$ is a $\Bbbk$-algebra generated by the indeterminates $x, y, z$ subject to the relations $yz-\alpha zy=\lambda$, $zx-\beta xz=\mu$, and $xy-\gamma yx=\nu$, where $\lambda,\mu,\nu \in \Bbbk x+\Bbbk y+\Bbbk z+\Bbbk$, and $\alpha, \beta, \gamma \in \Bbbk^{*}$. If the set $\{x^iy^jz^k\mid i,j,k\geq 0\}$ forms a $\Bbbk$-basis of $\mathcal{A}$, then it is called a \textit{3-dimensional skew polynomial $\Bbbk$-algebra}
\end{definition}

Up to isomorphism, there exist fifteen 3-dimensional skew polynomial $\Bbbk$-algebras \cite[Theorem C4.3.1]{Rosenberg1995}. Different authors have studied ring-theoretical and geometrical properties of these algebras (e.g., \cite{Redman1999}).  
It follows from Definition \ref{monito} that every $3$-dimensional skew polynomial algebra is an SPBW extension over the field $\Bbbk$.  By Theorem \ref{PBWTheorem2.10}, the diameter of the nilpotent graph of $\mathcal{A}$ must be either 2 or 3. Additionally, the girth of the nilpotent graph of $\mathcal{A}$ cannot exceed that of the nilpotent graph of $\Bbbk$ by Theorem \ref{PBWTheorem2.13}.

\begin{example}
   \cite[p. 30]{Fajardoetal2020} The {\em diffusion algebra} $A$ is generated by $2n$ indeterminates  $D_i, x_i$ over $\Bbbk$ with $1 \leq i \leq n$ and subjects to the relations
\begin{center}
    $\displaystyle x_ix_j = x_jx_i,\ \ x_iD_j = D_jx_i, \ \ 1 \leq i, j \leq n$, \\
$c_{ij}D_iD_j - c_{ji}D_jD_i = x_jD_i - x_iD_j , \ \  i < j, \ \ c_{ij} , c_{ji} \in \Bbbk^{*}$.
\end{center}
According to Definition \ref{gpbwextension}, the algebra $A$ can be seen as an SPBW extension of $\Bbbk[x_1, \dots , x_n]$, but not as a PBW extension or an iterated skew polynomial ring of injective type. Theorem \ref{PBWTheorem2.10} provides bounds on the diameter of the nilpotent graph $\Gamma_N(A)$, proving that the maximal distance between any two non-zero nilpotent elements in $A$ is either $2$ or $3$, due to $\Bbbk[x_1, \dots, x_n]$ being $(\Sigma, \Delta)$-compatible and 2-primal. Moreover, Theorem \ref{PBWTheorem2.13} establishes that the girth of $\Gamma_N(A)$ does not exceed that of $\Gamma_N(\Bbbk[x_1, \dots, x_n])$. 
\end{example}

\begin{example}
Algebras whose generators satisfy quadratic relations such as Clifford algebras, Weyl-Heisenberg algebras, and Sklyanin algebras, play an important role in analysis and mathematical physics. Motivated by these facts, Golovashkin and Maximov \cite{GolovashkinMaximov2005} considered the algebras $Q(a, b, c)$, with two generators $x$ and $y$, defined by the quadratic relations
\begin{equation}\label{GolovashkinMaximov2005(1)}
    yx = ax^2 + bxy + cy^2,
\end{equation}
    
where the coefficients $a$, $b$, and $c$ belong to an arbitrary field $\Bbbk$ of characteristic zero. They presented conditions on these elements under which such an algebra has a PBW basis of the form $\mathcal{B}=\{x^m y^n \mid m, n \in \mathbb{N}\}$. Golovashkin and Maximov given a necessary and sufficient condition for a PBW basis as above to exist when $ac + b \neq 0$ \cite[Section 3]{GolovashkinMaximov2005}. If $ac + b = 0$, they proved that if $b \neq 0, -1$ then $Q(a, b, c)$ has a PBW basis, and if $b = -1$ then the set $\mathcal{B}$ is linearly independent but do not form a PBW basis of $Q(a, b, c)$ \cite[Section 5]{GolovashkinMaximov2005}.
    
We can prove that if $a$, $b$, and $c$ are not all zero, then $Q(a, b, c)$ is neither a skew polynomial ring of $\Bbbk$, $\Bbbk[x]$, nor of $\Bbbk[y]$. If $b \neq 0$ and $c = 0$, one can verify that $Q(a, b, c)$ is an SPBW extension of $\Bbbk[x]$, that is $Q(a, b, c) \cong \sigma(\Bbbk[x])\langle y\rangle$. Theorem \ref{PBWTheorem2.10} determines that the diameter of the nilpotent graph of $Q(a,b,0)$ must be either 2 or 3, providing bounds on how nilpotent elements relate to each other in these quadratic algebras. Moreover, the girth of the nilpotent graph of $Q(a,b,0)$ cannot exceed that of the nilpotent graph of $\Bbbk[x]$ by Theorem \ref{PBWTheorem2.13}. 

\end{example}
    
\begin{example}
Jordan \cite{Jordan2000} introduced a class of iterated skew polynomial rings $R(B, \sigma, c, p)$, known as \emph{ambiskew polynomial rings}, which include different examples of noncommutative algebras. These structures have been investigated by Jordan at various levels of generality in several papers \cite{Jordan1993, Jordan1993b, Jordan1995, Jordan1995b}. We recall the treatment of ambiskew polynomial rings in the framework appropriate to down-up algebras with $\beta \neq 0$.

Let $B$ be a commutative $\Bbbk$-algebra, $\sigma$ be a $\Bbbk$-automorphism of $B$, and elements $c \in B$ and $p \in \Bbbk^*$. If $S$ is the skew polynomial ring $B[x; \sigma^{-1}]$, and extend $\sigma$ to $S$ by setting $\sigma(x) = p x$, then there is a $\sigma$-derivation $\delta$ of $S$ such that $\delta(B) = 0$ and $\delta(x) = c$ \cite[p.~41]{Cohn1985}. The \emph{ambiskew polynomial ring} $R = R(B, \sigma, c, p)$ is the ring $S[y; \sigma, \delta]$, which satisfies the following relations:
\begin{equation}
    yx - p xy = c, \quad \text{and, for all } b \in B, \quad x b = \sigma^{-1}(b) x \quad \text{and} \quad y b = \sigma(b) y.
\end{equation}

Equivalently, $R$ can be presented as $R = B[y; \sigma][x; \sigma^{-1}, \delta']$, where $\sigma(y) = p^{-1} y$, $\delta'(B) = 0$, and $\delta'(y) = -p^{-1} c$, yielding the relation $xy - p^{-1} yx = -p^{-1} c$. If we interpret the relation $x b = \sigma^{-1}(b) x$ as $b x = x \sigma(b)$, we observe that the definition involves twists from both sides using $\sigma$; this motivates the name of these objects. It is well-known that every generalized Weyl algebra is isomorphic to a factor of an ambiskew polynomial ring, and the ring $R(B, \sigma, c, p)$ is isomorphic to the generalized Weyl algebra $B[w](\sigma, w)$, where $\sigma$ is extended to $B[w]$ by setting $\sigma(w) = p w + \sigma(c)$. It is not difficult to show that ambiskew polynomial rings are skew PBW extensions over $B$, that is $R(B, \sigma, c, p) \cong \sigma(B)\langle y, x \rangle$. If $B$ has exactly two minimal primes, then Theorem \ref{PBWTheorem2.9} establishes that ${\rm diam}(\Gamma_N(R(B, \sigma, c, p))= 2$. Theorem \ref{PBWTheorem2.10} provides bounds on the diameter of $\Gamma_N (R(B, \sigma, c, p))$, demonstrating that for a $(\Sigma, \Delta)$-compatible and 2-primal ring $B$, the maximal distance between any two nilpotent elements in $R(B, \sigma, c, p)$ is either $2$ or $3$. Furthermore, Theorem \ref{PBWTheorem2.13} proves that the girth of $\Gamma_N (R(B, \sigma, c, p))$ cannot exceed that of $\Gamma_N (B)$. Additionally, when $B$ has infinite girth but $Z_N^{*} (B) \neq \emptyset$, the girth of $\Gamma_N (R(B, \sigma, c, p))$ is three if $|{\rm nil}(B)| = 2$, and four if $B$ is reduced by Theorem \ref{Corollary2.14Nikmehr}.
\end{example}

\begin{example}
{\em Skew bi-quadratic algebras} were introduced by Bavula \cite{Bavula2023} as a framework for classifying bi-quadratic algebras on three generators having a PBW basis.

If $L_n(R)$ denotes the set lower triangular matrices with elements of $R$, $\mathcal{Z}(R)$ is the center of $R$, $\sigma = (\sigma_1, \dotsc, \sigma_n)$ is an $n$-tuple of commuting endomorphisms of $R$, $\delta = (\delta_1, \dotsc, \delta_n)$ is an $n$-tuple of $\sigma$-derivations of $R$ (i.e., $\delta_i$ is a $\sigma_i$-derivation of $R$ for all $ i \le n$), $Q = (q_{ij}) \in L_n(Z(R))$, $\mathbb{A}:= (a_{ij, k}) \in L_n(R)$ and $\mathbb{B}:= (b_{ij}) \in L_n(R)$ where $1 \leq j < i \leq n$ and $k \le n$, then {\em skew bi-quadratic algebra} ({\em SBQA}), which is denoted by $R[x_1,\dotsc, x_n;\sigma, \delta, Q, \mathbb{A}, \mathbb{B}]$ is the ring generated by $R$ and the elements $x_1, \dotsc, x_n$ subject to the relations:
\begin{align}
    x_i r &= \sigma_i(r)x_i + \delta_i(r),\quad \text{for all}\ 1 \le i \le n, \text{ and } r \in R, \label{Bavula2023(1)} \\
    x_i x_j - q_{ij} x_j x_i &= \sum_{k=1}^{n} a_{ij, k} x_k + b_{ij},\quad \text{for all } j < i.\label{Bavula2023(2)}
\end{align}

If $\sigma_i$ is the identity homomorphism of $R$ and $\delta_i$ is the zero derivation of $R$, for all $1 \le i \le n$, then $R[x_1, \dotsc, x_n; Q, \mathbb{A}, \mathbb{B}]$ is called a {\em bi-quadratic algebra} ({\em BQA}). A skew bi-quadratic algebra has a {\em PBW basis} if $A = \bigoplus_{\alpha \in \mathbb{N}^{n}} R x^{\alpha}$ where $x^{\alpha} = x_1^{\alpha_1} \dotsb x_n^{\alpha_n}$.
\end{example}

The definition of SBQA makes it clear that $R[x_1, \dotsc, x_n;\sigma, \delta, Q, \mathbb{A}, \mathbb{B}] \cong \sigma(R)\langle x_1,\dotsc, x_n\rangle$. Moreover, SPBW extensions are more general than skew bi-quadratic algebras, since they do not require the $\sigma$'s to commute (cf. \cite[Definition 2.1]{AcostaLezamaReyes2015}) nor impose any membership conditions on $q_{ij}$ and $b_{ij}$. In this way, if $R$ has exactly two minimal prime ideals, then Theorem \ref{PBWTheorem2.9} shows that ${\rm diam}(\Gamma_N(R[x_1, \dotsc, x_n;\sigma, \delta, Q, \mathbb{A}, \mathbb{B}])= 2$. Theorem \ref{PBWTheorem2.10} establishes bounds for the diameter of $\Gamma_N (R[x_1, \dotsc, x_n;\sigma, \delta, Q, \mathbb{A}, \mathbb{B}])$, showing that if $R$ is $(\Sigma, \Delta)$-compatible and 2-primal, then the maximum distance between any two nilpotent elements of $R[x_1, \dotsc, x_n;\sigma, \delta, Q, \mathbb{A}, \mathbb{B}]$ is either $2$ or $3$. Theorem \ref{PBWTheorem2.13} shows that $\Gamma_N (R[x_1, \dotsc, x_n;\sigma, \delta, Q, \mathbb{A}, \mathbb{B}])$ cannot have larger girth than $\Gamma_N (R)$. If $R$ has infinite girth but $Z_N^{*} (R) \neq \emptyset$, then $\Gamma_N (R[x_1, \dotsc, x_n;\sigma, \delta, Q, \mathbb{A}, \mathbb{B}])$ have girth three when $|{\rm nil}(R)| = 2$, or four when $R$ is reduced by Theorem \ref{Corollary2.14Nikmehr}.

\section{Future work}

Motivated by the concept of weak compatibility introduced by Reyes and Su\'arez \cite{ReyesSuarez2020}, it is natural to turn our attention to SPBW extensions defined over weak $(\Sigma,\Delta)$-compatible NI rings, with the aim of characterizing both the diameter and girth of their nilpotent graphs.

In this direction, Hashemi et al. \cite{HashemiAmirjanAlhevaz2017} presented a characterization of the possible diameters of $\Gamma(R[x;\sigma,\delta])$ in terms of the diameter of $\Gamma(R)$. Furthermore, they demonstrated how certain properties of $R$ can be determined when the diameters of both $R$ and $R[x;\sigma, \delta]$ are known. A subsequent research objective would be to investigate similar diameter characterizations for nilpotent graphs and to explore how the properties of $R$ can be deduced from knowledge of both a weak $(\Sigma,\Delta)$-compatible ring $R$ and an SPBW extension of $R$.

\section{Declarations}\label{Declarations}

The first author was supported by Direcci\'on Ciencias B\'asicas, Universidad ECCI - sede Bogot\'a, while the second author was supported by Facultad de Ciencias, Universidad Nacional de Colombia - Sede Bogot\'a.

All authors declare that they have no conflicts of interest.




\begin{thebibliography}{}
\bibitem{AbdiTalebi2024} M. Abdi, Y. Talebi, On the diameter of the zero-divisor graph over skew PBW extensions, {\em J. Algebra Appl.} {\bf 23}(05) (2024) p. 2450089. \href{https://doi.org/10.1142/S0219498824500890}{DOI: 10.1142/S0219498824500890}.

\bibitem{AlhevazKiani2014} A. Alhevaz, D. Kiani, On Zero-Divisors in Skew Inverse Laurent Series Over NonCommutative Rings, {\em Comm. Algebra} {\bf 42}(2) (2014) 469--487. \href{https://doi.org/10.1080/00927872.2012.716119}{DOI: 10.1080/00927872.2012.716119}.

\bibitem{AndersonNaseer1993} D. D. Anderson, M. Naseer, Beck's coloring of a commutative ring, {\em J. Algebra} {\bf 159}(2) (1993) 500--514. \href{https://doi.org/10.1006/jabr.1993.1171}{DOI: 10.1006/jabr.1993.1171}.

\bibitem{AndersonLivingston1999} D. F. Anderson, P. S. Livingston, The zero-divisor graph of a commutative ring, {\em J. Algebra} {\bf 217}(2) (1999) 434--447. \href{https://doi.org/10.1006/jabr.1998.7840}{DOI: 10.1006/jabr.1998.7840}.

\bibitem{AndersonMulay2007} D. F. Anderson, S. B. Mulay, On the diameter and girth of a zero-divisor graph, {\em J. Pure Appl. Algebra} {\bf 210}(2) (2007) 543--550. \href{https://doi.org/10.1016/j.jpaa.2006.10.007}{DOI: 10.1016/j.jpaa.2006.10.007}.

\bibitem{Annin2002PhD} {S. A. Annin, Associated and Attached Primes Over Noncommutative Rings, Ph.D. Thesis, University of California, Berkeley (2002).}

\bibitem{Annin2002} S. A. Annin, Associated Primes Over Skew Polynomial Rings, {\em Comm. Algebra} {\bf 30}(5) (2002) 2511--2528. \href{https://doi.org/10.1081/AGB-120003481}{DOI: 10.1081/AGB-120003481}.

\bibitem{Annin2004} S. A. Annin, Associated primes over Ore extension rings, {\em J. Algebra Appl.} {\bf 3}(2) (2004) 193--205. \href{https://doi.org/10.1142/S0219498804000782}{DOI: 10.1142/S0219498804000782}.

\bibitem{Axteletal2005} M. Axtell, J. Coykendall and J. Stickles, Zero-Divisor Graphs of Polynomials and Power Series Over Commutative Rings, {\em Comm. Algebra} {\bf 33}(6) (2005) 2043--2050. \href{https://doi.org/10.1081/AGB-200063357}{DOI: 10.1081/AGB-200063357}.

\bibitem{AndersonCamillo1999} D. D. Anderson, V. Camillo, Semigroups and rings whose zero products commute, {\em Comm. Algebra} {\bf 27}(6) (1999) 2847--2852. \href{https://doi.org/10.1080/00927879908826596}{DOI: 10.1080/00927879908826596}.

\bibitem{Bavula2023} V. V. Bavula, Description of bi-quadratic algebras on 3 generators with PBW basis, {\em J. Algebra} {\bf 631}(1) (2023) 695--730. \href{https://doi.org/10.1016/j.jalgebra.2023.05.013}{DOI: 10.1016/j.jalgebra.2023.05.013}.

\bibitem{Beck1988} I. Beck, Coloring of commutative rings, {\em J. Algebra} {\bf 116}(1) (1988) 208--226. \href{https://doi.org/10.1016/0021-8693(88)90202-5}{DOI: 10.1016/0021-8693(88)90202-5}.

\bibitem{Bell1968} H. E. Bell, Duo rings: Some applications to commutativity theorems, {\em Canad. Math. Bull.} {\bf 11}(3) (1968) 375--380. \href{https://doi.org/10.4153/CMB-1968-042-0}{DOI: 10.4153/CMB-1968-042-0}.

\bibitem{Bell1970} H. E. Bell, Near-rings in which each element is a power of itself, {\em Bull. Aust. Math. Soc.} {\bf 2}(3) (1970) 363--368. \href{https://doi.org/10.1017/S0004972700042052}{DOI: 10.1017/S0004972700042052}.

\bibitem{BellGoodearl1988} A. D. Bell, K. R. Goodearl, Uniform rank over differential operator rings and Poincaré-Birkhoff-Witt extensions, {\em Pacific J. Math.} {\bf 131}(11) (1988) 13--37. \href{https://msp.org/pjm/1988/131-1/p02.xhtml}{DOI: 10.2140/pjm.1988.131.13}.

\bibitem{BellSmith1990} {A. D. Bell, S. P. Smith, Some 3-dimensional skew polynomial rings, University of Wisconsin, Milwaukee, preprint (1990).}

\bibitem{Birkenmeieretal1993} G. F. Birkenmeier, H. E. Heatherly and E. K. Lee, Completely prime ideals and associated radicals, {\em Ring Theory-Proceedings Of The Biennial Ohio State-denison Conference 1992}. World Scientific (1993) 102--129. \href{https://doi.org/10.1142/9789814535816}{DOI: 10.1142/9789814535816}.

\bibitem{Chen2003} P. Chen, A kind of graph structure of rings, {\em Algebra Colloq.} {\bf 10}(2) (2003) 229--238.

\bibitem{Cohn1985} P. M. Cohn, Free Rings and Their Relations, {\em Second Edition, Academic Press, London} (1985).

\bibitem{Cohn1999} P. M. Cohn, Reversible rings, {\em Bull. London Math. Soc.} {\bf 31}(6) (1999) 641--648. \href{https://doi.org/10.1112/S0024609399006116}{DOI: 10.1112/S0024609399006116}.

\bibitem{Fajardoetal2020} W. Fajardo, C. Gallego, O. Lezama, A. Reyes, H. Su\'arez and H. Venegas, Skew PBW Extensions: Ring and Module-theoretic Properties, Matrix and Gröbner Methods, and Applications, {\em Algebra and Applications}, Springer, Cham, 2020. \href{https://doi.org/10.1007/978-3-030-53378-6}{DOI: 10.1007/978-3-030-53378-6}.

\bibitem{LezamaGallego2011} C. Gallego, O. Lezama, Gr\"obner bases for ideals of $\sigma$-PBW extensions, {\em Comm. Algebra} {\bf 39}(1) (2011) 50--75. \href{https://doi.org/10.1080/00927870903431209}{DOI: 10.1080/00927870903431209}.

\bibitem{GolovashkinMaximov2005} A. V. Golovashkin, V. M. Maximov, On algebras of skew polynomials generated by quadratic homogeneous relations, {\em J. Math. Sci.} {\bf 129}(2) (2005) 3757--3771. \href{https://doi.org/10.1007/s10958-005-0311-z}{DOI: 10.1007/s10958-005-0311-z}.

\bibitem{Habeb1990} J. M. Habeb, A note on zero commutative and duo rings, {\em Math. J. Okayama Univ.} {\bf 32}(1) (1990) 73--76.

\bibitem{HashemiAbdiAlhevaz2017} E. Hashemi, M. Abdi and A. Alhevaz, On the diameter of the compressed zero-divisor graph, {\em Comm. Algebra} {\bf 45}(11) (2017) 4855--4864. \href{https://doi.org/10.1080/00927872.2017.1284227}{DOI: 10.1080/00927872.2017.1284227}.

\bibitem{HashemiAbdiAlhevaz2019} E. Hashemi, M. Abdi and A. Alhevaz, On diameter of the zero-divisor and the compressed zero-divisor graphs of skew Laurent polynomial rings, {\em J. Pure Appl. Algebra} {\bf 18}(7) (2019) p. 1950126. \href{https://doi.org/10.1142/S0219498819501263}{DOI: 10.1142/S0219498819501263}.

\bibitem{HashemiAbdiAlhevazSu2020} E. Hashemi, M. Abdi, A. Alhevaz and H. Su, Domination numbers of graphs associated with rings, {\em J. Pure Appl. Algebra} {\bf 19}(1) (2020) p. 2050009. \href{https://doi.org/10.1142/S0219498820500097}{DOI: 10.1142/S0219498820500097}.

\bibitem{HashemiAmirjanAlhevaz2017} E. Hashemi, R. Amirjan and A. Alhevaz, On zero-divisor graphs of skew polynomial rings over non-commutative rings, {\em J. Algebra Appl.} {\bf 16}(2) (2017) p. 1750056. \href{https://doi.org/10.1142/S0219498817500566}{DOI: 10.1142/S0219498817500566}.

\bibitem{HashemiKhalilAlhevaz2017} E. Hashemi, K. Khalilnezhad and A. Alhevaz, $(\Sigma,\Delta)$-Compatible Skew PBW Extension Ring, {\em Kyungpook Math. J.} {\bf 57}(3) (2017) 401--417. \href{https://doi.org/10.5666/KMJ.2017.57.3.401}{DOI: 10.5666/KMJ.2017.57.3.401}.

\bibitem{HashemiKhalilAlhevaz2019} E. Hashemi, K. Khalilnezhad and A. Alhevaz, Extensions of rings over 2-primal rings, {\em Matematiche} {\bf 74}(1) (2019) 141--162. \href{https://lematematiche.dmi.unict.it/index.php/lematematiche/article/view/1756}{DOI: 10.4418/2019.74.1.10}.

\bibitem{HashemiMoussavi2005} E. Hashemi, A. Moussavi, Polynomial extensions of quasi-Baer rings, {\em Acta Math. Hungar.} {\bf 107}(3) (2005) 207--224. \href{https://doi.org/10.1007/s10474-005-0191-1}{DOI: 10.1007/s10474-005-0191-1}.

\bibitem{Higuera2020} S. Higuera, Some remarks about the fusible property of noncommutative polynomial extensions, {\em Master Thesis}, Universidad Nacional de Colombia (2020).

\bibitem{HigueraRamirezReyes2024} S. Higuera, M. C. Ram\'irez and A. Reyes, On the Uniform Dimension and the Associated Primes of Skew PBW Extensions, {\em Bull. Braz. Math. Soc.} {\bf 55}(45) (2024) 1--24. \href{https://doi.org/10.1007/s00574-024-00419-2}{DOI: 10.1007/s00574-024-00419-2}.

\bibitem{HigueraReyes2022} S. Higuera, A. Reyes, On weak annihilators and nilpotent associated primes of skew PBW extensions, {\em Comm. Algebra} {\bf 51}(11) (2023) 4839--4861. \href{https://doi.org/10.1080/00927872.2023.2222393}{DOI: 10.1080/00927872.2023.2222393}.

\bibitem{IsaevPyatovRittenberg2001} A. P. Isaev, P. N. Pyatov and V. Rittenberg, Diffusion algebras, {\em J. Phys. A} {\bf 34}(29) (2001) 5815--5834. \href{https://doi.org/10.1088/0305-4470/34/29/306}{DOI: 10.1088/0305-4470/34/29/306}.

\bibitem{Jordan1993b} D. A. Jordan, Height one prime ideals of certain iterated skew polynomial rings, {\em Math. Proc. Cambridge Philos. Soc.} {\bf 114}(3) (1993) 407--425. \href{https://doi.org/10.1017/S0305004100071693}{DOI: 10.1017/S0305004100071693}.

\bibitem{Jordan1993} D. A. Jordan, Iterated Skew Polynomial Rings and Quantum Groups, {\em J. Algebra} {\bf 156}(1) (1993) 194--218. \href{https://doi.org/10.1006/jabr.1993.1070}{DOI: 10.1006/jabr.1993.1070}.

\bibitem{Jordan1995} D. A. Jordan, Finite-dimensional simple modules over certain iterated skew polynomial rings, {\em J. Pure Appl. Algebra} {\bf 98}(1) (1995) 45--55. \href{https://doi.org/10.1016/0022-4049(95)90017-9}{DOI: 10.1016/0022-4049(95)90017-9}.

\bibitem{Jordan1995b} D. A. Jordan, A Simple Localization of the Quantized Weyl Algebra, {\em J. Algebra} {\bf 174}(1) (1995) 267--281. \href{https://doi.org/10.1006/jabr.1995.1128}{DOI: 10.1006/jabr.1995.1128}.

\bibitem{Jordan2000} D. A. Jordan, Down-Up Algebras and Ambiskew Polynomial Rings, {\em J. Algebra} {\bf 228}(1) (2000) 311--346. \href{https://doi.org/10.1006/jabr.1999.8264}{DOI: 10.1006/jabr.1999.8264}.

\bibitem{KimLee2003} N. K. Kim, Y. Lee, Extensions of reversible rings, {\em J. Pure Appl. Algebra} {\bf 185}(1-3) (2003) 207--223. \href{https://doi.org/10.1016/S0022-4049(03)00109-9}{DOI: 10.1016/S0022-4049(03)00109-9 }.

\bibitem{Krempa1996} J. Krempa, Some examples of reduced rings, {\em Algebra Colloq.} {\bf 3}(4) (1996) 289--300.

\bibitem{Kulkarni1999} R. S. Kulkarni, Irreducible Representations of Witten's Deformations of $U(\mathfrak{sl}_2)$, {\em J. Algebra} {\bf 214}(1) (1999) 64--91. \href{https://doi.org/10.1006/jabr.1998.7692}{DOI: 10.1006/jabr.1998.7692}.

\bibitem{KuzminaMaltsev2015} A. S. Kuzmina, Yu. N. Maltsev, On finite rings in which zero-divisor graphs satisfy the Dirac's condition, {\em Lobachevskii J. Math.} {\bf 36}(4) (2015) 375--383. \href{https://doi.org/10.1134/S1995080215040071}{DOI: 10.1134/S1995080215040071}.

\bibitem{KwakLee2012} T. K. Kwak, Y. Lee, Reflexive property of rings, {\em Comm. Algebra} {\bf 40}(4) (2012) 1576--1594. \href{https://doi.org/10.1080/00927872.2011.554474}{DOI: 10.1080/00927872.2011.554474}.


\bibitem{Lambek1971} J. Lambek, On the representation of modules by sheaves of factor modules, {\em Canad. Math. Bull.} {\bf 14}(3) (1971) 359--368. \href{https://doi.org/10.4153/CMB-1971-065-1}{DOI: 10.4153/CMB-1971-065-1}.


\bibitem{AcostaLezamaReyes2015} O. Lezama, J. P. Acosta, A. Reyes, Prime ideals of skew PBW extensions, {\em Rev. Un. Mat. Argentina} {\bf 56}(2) (2015) 39--55. \href{https://doi.org/10.48550/arXiv.1402.2321}{DOI: 10.48550/arXiv.1402.2321}.

\bibitem{LezamaReyes2014} O. Lezama, A. Reyes, Some Homological Properties of Skew PBW Extensions, {\em Comm. Algebra} {\bf 42}(3) (2014) 1200--1230. \href{https://doi.org/10.1080/00927872.2012.735304}{DOI: 10.1080/00927872.2012.735304}.

\bibitem{LiLi2010} A. Li, Q. Li, A Kind of Graph Structure on Non-Reduced Rings, {\em Algebra Colloq.} {\bf 17}(1) (2010) 173--180. \href{https://doi.org/10.1142/S1005386710000180}{DOI: 10.1142/S1005386710000180}.

\bibitem{LouzariReyes2020} M. Louzari, A. Reyes, Minimal prime ideals of skew PBW extensions over 2-primal compatible rings, {\em Rev. Colombiana Mat.} {\bf 54}(1) (2020) 39--63. \href{https://doi.org/10.15446/recolma.v54n1.89788}{DOI: 10.15446/recolma.v54n1.89788}.

\bibitem{Lucas2006} T. Lucas, The diameter of a zero-divisor graph, {\em J. Algebra} {\bf 301}(1) (2006) 174--193. \href{https://doi.org/10.1016/j.jalgebra.2006.01.019}{DOI: 10.1016/j.jalgebra.2006.01.019}.

\bibitem{Marks2001} G. Marks, On 2-primal Ore extensions, {\em Comm. Algebra} {\bf 29}(5) (2001) 2113--2123. \href{https://doi.org/10.1081/AGB-100002173}{DOI: 10.1081/AGB-100002173}.

\bibitem{Marks2002} G. Marks, Reversible and symmetric rings, {\em J. Pure Appl. Algebra} {\bf 174}(3) (2002) 311--318. \href{https://doi.org/10.1016/S0022-4049(02)00070-1}{DOI: 10.1016/S0022-4049(02)00070-1}.

\bibitem{McConnellRobson2001} J. C. McConnell, J. C. Robson, Noncommutative Noetherian Rings, {\em Graduate Studies in Mathematics}, Amer. Math. Soc. (2001).

\bibitem{Narbonne1982} {L. M. de Narbonne, Anneaux semi-commutatifs et unis riels anneaux dont les id aux principaux sont idempotents, {\em Proceedings of the 106th National Congress of Learned Societies, Perpignan}, (1981) 71--73.}

\bibitem{Nikmehretal2013} M. J. Nikmehr, S. Khojasteh, On the nilpotent graph of a ring, {\em Turk. J. Math.} {\bf 37}(4) (2013) 553--559. \href{https://doi.org/10.3906/mat-1112-35}{DOI: 10.3906/mat-1112-35}.

\bibitem{NikmehrAzadi2020} M. J. Nikmehr, A. Azadi, Nilpotent graphs of skew polynomial rings over non-commutative rings, {\em Trans. Comb.} {\bf 9}(1) (2020) 41--48. \href{https://doi.org/10.22108/TOC.2019.117529.1651}{DOI: 10.22108/TOC.2019.117529.1651}.

\bibitem{NinoRamirezReyes2020} A. Ni\~no, M. C. Ram\'irez, A. Reyes, Associated prime ideals over skew PBW extensions, {\em Comm. Algebra} {\bf 48}(12) (2020) 5038--5055. \href{https://doi.org/10.1080/00927872.2020.1778012}{DOI: 10.1080/00927872.2020.1778012}.

\bibitem{NinoReyes2020} A. Ni\~no, A. Reyes, Some remarks about minimal prime ideals of skew Poincaré-Birkhoff-Witt extensions, {\em Algebra Discrete Math.} {\bf 30}(2) (2020) 207--229. \href{https://doi.org/10.12958/adm1307}{DOI:10.12958/adm1307}.

\bibitem{Ore1933} O. Ore, Theory of Non-Commutative Polynomials, {\em Ann. of Math. {\rm (}2{\rm )}} {\bf 34}(3) (1933) 480--508. \href{https://doi.org/10.2307/1968173}{DOI: 10.2307/1968173}.

\bibitem{LunquenJingwang2011} L. Ouyang, J. Liu, On weak $(\alpha, \delta)$-compatible rings, {\em Int. J. Algebra} {\bf 5}(26) (2011) 1283--1296.

\bibitem{OuyangBirkenmeier2012} L. Ouyang, G. F. Birkenmeier, Weak annihilator over extension rings, {\em Bull. Malays. Math. Sci. Soc.} {\bf 35}(2) (2012) 345--347.

\bibitem{OuyangLiu2012} L. Ouyang, J. Liu, Weak associated primes over differential polynomial rings, {\em Rocky Mountain J. Math.} {\bf 42}(5) (2012) 1583--1600. \href{https://doi.org/10.1216/RMJ-2012-42-5-1583}{DOI: 10.1216/RMJ-2012-42-5-1583}.


\bibitem{Redman1999} I. T. Redman, The homogenization of the three dimensional skew polynomial algebras of type I, {\em Comm. Algebra} {\bf 27}(11) (1999) 5587--5602. \href{https://doi.org/10.1080/00927879908826775}{DOI: 10.1080/00927879908826775}.

\bibitem{Redmond2002} S. P. Redmond, The zero-divisor graph of a non-commutative ring, {\em Int. J. Commut. Rings} {\bf 1}(4) (2002) 203--211.

\bibitem{Reyes2015Baer} A. Reyes, Skew PBW extensions of Baer, quasi-Baer, p.p. and p.q.-rings, {\em Rev. Integr. Temas Mat.} {\bf 33}(2) (2015) 173--189. \href{http://dx.doi.org/10.18273/revint.v33n2-2015007}{DOI: 10.18273/revint.v33n2-2015007}.

\bibitem{ReyesSuarez2016ArmendarizUIS} A. Reyes, H. Su\'arez, Armendariz property for skew PBW extensions and their classical ring of quotients, {\em Rev. Integr. Temas Mat.} {\bf 34}(2) (2016) 147--168. \href{http://dx.doi.org/10.18273/revint.v34n2-2016004}{DOI: 10.18273/revint.v34n2-2016004}.

\bibitem{ReyesSuarez2018-3} A. Reyes, H. Su\'arez, $\sigma$-PBW extensions of skew Armendariz rings, {\em Adv. Appl. Clifford Algebr.} {\bf 27}(4) (2017) 3197--3224. \href{https://doi.org/10.1007/s00006-017-0800-4}{DOI: 10.1007/s00006-017-0800-4}.

\bibitem{ReyesSuarez2018UMA} A. Reyes, H. Su\'arez, A notion of compatibility for Armendariz and Baer properties over skew PBW extensions, {\em Rev. Un. Mat. Argentina} {\bf 59}(1) (2018) 157--178. \href{https://doi.org/10.33044/revuma.v59n1a08}{DOI: 10.33044/revuma.v59n1a08}.

\bibitem{ReyesSuarez2020} A. Reyes, H. Su\'arez, Skew Poincar\'e-Birkhoff-Witt extensions over weak compatible rings, {\em J. Algebra Appl.} {\bf 19}(12) (2020) p. 2050225. \href{https://doi.org/10.1142/S0219498820502254}{DOI: 10.1142/S0219498820502254}.

\bibitem{Rosenberg1995} A. Rosenberg, {Non-Commutative Algebraic Geometry and Representations of Quantized Algebras}. {\em Mathematics and Its Applications, Vol. 330}, Dordrecht, Springer (1995). 

\bibitem{Shin1973} G. Y. Shin, Prime ideals and sheaf representation of a pseudo symmetric ring, {\em Trans. Amer. Math. Soc.} {\bf 184} (1973) 43--60. \href{https://doi.org/10.1090/S0002-9947-1973-0338058-9}{DOI: 10.1090/S0002-9947-1973-0338058-9}.

\bibitem{WangChen2018} Y. Wang, W. Chen, Minimal Prime Ideals and Units in 2-Primal Ore Extensions, {\em J. Math. Res. Appl.} {\bf 38}(4) (2018) 377--383.

\bibitem{Zhuravlevetal2013} E. V. Zhuravlev, A. S. Kuz'mina, Yu. N. Mal'tsev, The Description of Varieties of Rings whose Finite Rings are Uniquely Determined by their Zero-Divisor Graphs, {\em Russ Math.} {\bf 57}(6) (2013) 10--20. \href{https://doi.org/10.3103/S1066369X13060029}{DOI: 10.3103/S1066369X13060029}.

\end{thebibliography}
\end{document}